\definecolor{Red}{cmyk}{0,1,1,0}
\definecolor{verde}{cmyk}{1,0,1,0}
\definecolor{loka}{cmyk}{.5,0,1,.5}
\definecolor{azul}{cmyk}{1,1,0,0}
\newcommand{\norm}[1]{\left\lVert#1\right\rVert}
\numberwithin{equation}{section}
\newcommand{\p}{\psi}
\newcommand{\be}{\begin{equation}}
\newcommand{\ee}{\end{equation}}
\newtheorem{theorem}{Theorem}
\newtheorem{definition}{Definition}
\newtheorem{lemma}{Lemma}
\newtheorem{property}{Property}
\begin{document}
\title{ Hilfer-Katugampola fractional derivative}
\author{D. S. Oliveira$^1$}
\address{$^1$ Department of Applied Mathematics, Institute of Mathematics,
 Statistics and Scientific Computation, University of Campinas --
UNICAMP, rua S\'ergio Buarque de Holanda 651,
13083--859, Campinas SP, Brazil\newline
e-mail: {\itshape \texttt{daniela@ime.unicamp.br, capelas@ime.unicamp.br }}}

\author{E. Capelas de Oliveira$^1$}

\begin{abstract} We propose a new fractional derivative, the Hilfer-Katugampola fractional
	derivative. Motivated by the Hilfer derivative this formulation
	interpolates the well-known fractional derivatives of  Hilfer,
	Hilfer-Hadamard, Riemann-Liouville, Hadamard, Caputo, Caputo-Hadamard,
	Liouville, Weyl, generalized and Caputo-type.  As an application, we
	consider a nonlinear fractional differential equation with an initial
	condition using this new formulation. We show that this equation is
	equivalent to a Volterra integral equation and demonstrate the
	existence and uniqueness of solution to the nonlinear initial value
	problem.

\vskip.5cm
\noindent
\emph{Keywords}: Generalized fractional integral; 
Hilfer-Katugampola fractional derivative; fractional differential equation; 
Volterra integral equation.
\newline 
MSC 2010 subject classifications. 26A33
\end{abstract}
\maketitle

\section{Introduction}

The number of operators of fractional integration and differentiation has been
increasing during the last years \cite{ ecotenreiro, bhairat, rubeco, Garra, 
katugampola2, katugampola1, katugampola, kilbas, danielaeco, vanterler, capelas}. In order to 
solve fractional differential equations, we mention the following works \cite{adjabi, almeida, 
Furati, Tatar, Barbara}, where the authors propose and prove the equivalence between an initial 
value problem and the Volterra integral equation. Moreover, the existence and uniqueness theorems 
whose solution lies in a convenient space are demonstrated. 

Motivated by some of these formulations and results, we introduce a new
fractional derivative, which resembles the Hilfer and Hilfer-Hadamard fractional 
derivatives \cite{Hilfer, Kassim}. This new fractional derivatives interpolates the
Hilfer, Hilfer-Hadamard, Riemann-Liouville, Hadamard, Caputo, Caputo-Hadamard, 
generalized and Caputo-type fractional derivatives, as well as the Weyl and Liouville 
fractional derivatives for particular cases of integration extremes. More details on 
this fractional derivatives mentioned above can be found in \cite{rubeco, gambo, Hilfer, 
R.Hilfer, katugampola1, katugampola, kilbas, danielaeco}. 

The fact that there exist more than one definition for fractional
derivatives makes choosing the convenient approach a crucial issue in solving a
given problem. Thus, the Hilfer-Katugampola fractional derivative proposed in this
paper is a generalization of the classical fractional derivatives, in order to
overcome the problem of choosing the operator of fractional
differentiation.

In order to obtain the analytical solution of some differential equations
involving this new operator of fractional differentiation, we firstly prove the
equivalence of a nonlinear differential equation, with adequate initial
condition, to a Volterra integral equation. In the sequence we present a
theorem on the existence and uniqueness of solution for such nonlinear
differential equation and the space in which this solution exists.

The text is organized as follows: in section 2, we present results that will be
used in the remaining sections. In section 3, we define our derivative of
non-integer order, the Hilfer-Katugampola fractional derivative, together with
some of its properties.  In section 4, we discuss the equivalence between an
initial value problem and a Volterra integral equation. In section 5, we
present and prove the existence and uniqueness theorem for the initial value
problem presented in the previous section. In section 6 we discuss, 
using the method of successive approximations, the analytical solution
of some fractional differential equations involving this differentiation
operator.

\section{Preliminaries}\label{sec:2}

Before we introduce Hilfer-Katugampola fractional derivatives, we present in
this section some results which will be useful throughout the text, e.g.\ the
Mittag-Leffler functions, the fixed point theorem and generalized fractional
integrals and derivatives.  Though Mittag-Leffler functions are well defined
for complex parameters, in this paper we use only real parameters
\cite{kilbas}.

\begin{definition} \label{Mittag}
A two-parameter Mittag-Leffler function $E_{\alpha, \beta}(x)$,
$\alpha,\beta, x \in\mathbb{R}$ with $\alpha>0$ and $\beta>0$, is defined by
\begin{eqnarray}
E_{\alpha, \beta}(x)=\sum_{k=0}^{\infty}\frac{x^k}{\Gamma(\alpha{k}+\beta)}. \label{MT2}
\end{eqnarray}
If $\beta=1$, we have the one-parameter Mittag-Leffler function, given by 
the following series:
\begin{eqnarray}
E_{\alpha}(x)=\sum_{k=0}^{\infty}\frac{x^k}{\Gamma(\alpha{k}+1)}. \label{MT1}
\end{eqnarray}
\end{definition}
The generalized Mittag-Leffler function $E_{\alpha,l,m}(x)$, introduced
by Kilbas and Saigo \cite{Saigo}, is defined as follows:
\begin{definition} \label{Mittag-generalized}
Let $\alpha,l,m,x\in\mathbb{R}$ such that $\alpha>0$, $m>0$ and $\alpha(jm+l)\notin\mathbb{Z}^{-}$.
The generalized Mittag-Leffler function is defined by
\begin{eqnarray}
E_{\alpha,l,m}(x)=\sum_{k=0}^{\infty}c_{k}x^{k},
\end{eqnarray}
where
\begin{eqnarray}
c_{0}=1 \quad\mbox{and} \quad c_{k}=\prod_{j=0}^{k-1}\frac{\Gamma[\alpha(jm+l)+1]}
{\Gamma[\alpha(jm+l+1)+1]}, \quad k\in\mathbb{N}.
\end{eqnarray}
\end{definition}
\begin{definition} \label{espacos}
Let $\Omega=[a,b]\, (0<a<b<\infty)$ be a finite interval on the half-axis 
$\mathbb{R}^{+}$ and the parameters $\rho>0$ and $0\leq{\gamma}<1$. 

\begin{enumerate}
	\item [\textnormal{(1)}]
We denote by $C[a,b]$ the space of continuous functions $g$ on $\Omega$ 
with the norm
\begin{eqnarray*}	
{\lVert g \rVert}_{C}=\max_{x\in\Omega}|g(x)|.
\end{eqnarray*}
	\item[\textnormal{(2)}]
The weighted space $C_{\gamma, \rho}[a,b]$ of functions $g$ on $(a,b]$ is defined by
\begin{eqnarray}
C_{\gamma,\rho}[a,b]=\left\{g:(a,b]\rightarrow\mathbb{R}:\left(\frac{x^{\rho}-{a}^{\rho}}
{\rho}\right)^{\gamma}g(x)\in{C}[a,b]\right\},
\end{eqnarray}
where $0\leq{\gamma}<1$ and with the norm
\begin{eqnarray}
{\lVert g \rVert}_{C_{\gamma,\rho}}=\norm{\left(\frac{x^{\rho}-{a}^{\rho}}{\rho}\right)^{\gamma}g(x)}_{C}
=\max_{x\in\Omega}\bigg|\left(\frac{x^{\rho}-{a}^{\rho}}{\rho}\right)^{\gamma}g(x)\bigg|, 
\end{eqnarray}
where ${C}_{0,\rho}[a,b]={C}[a,b]$.\\
\item [\textnormal{(3)}] Let $\displaystyle \delta_{\rho}=\left(t^{\rho-1}\frac{d}{dt}\right)$. 
For $n\in\mathbb{N}$ we denote by ${C}_{{\delta}_{{\rho},\gamma}}^{n}[a,b]$ the Banach space
of functions $g$ which are continuously differentiable on $[a,b]$, with operator $\delta_{\rho}$, 
 up to order $(n-1)$ and which have the derivative $\delta_{\rho}^{n}\,g$ of order $n$ on 
$(a,b]$ such that $\delta_{\rho}^{n} g\in C_{\gamma,\rho}[a,b]$, that is,
\begin{eqnarray*}
	{C}_{{\delta}_{{\rho}},\gamma}^{n}[a,b]=\left\{ g:[a,b] \rightarrow \mathbb{R} :
	\delta_{\rho}^{k}\,g\in C[a,b], k=0,1,\ldots,n-1,\delta_{\rho}^{n}\,g\in C_{\gamma,\rho}[a,b]\right\}, 
\end{eqnarray*}
		where $n\in\mathbb{N}$, with the norms 
\begin{eqnarray*}
{\lVert g \rVert}_{{C}_{{\delta}_{{\rho}},\gamma}^{n}}=\sum_{k=0}^{n-1}
{\lVert \delta_{\rho}^{k}\,g \rVert}_{C}+{\lVert \delta_{\rho}^{n}\,g \rVert}_{C_{\gamma},\rho}, 
\quad {\lVert g \rVert}_{{C}_{\delta_{\rho}}^{n}}=\sum_{k=0}^{n}\max_{x\in\Omega}|\delta_{\rho}^{k}\,g(x)|.
\end{eqnarray*}
For $n=0$, we have
\begin{eqnarray*}
{C}_{{\delta}_{{\rho}},\gamma}^{0}[a,b]={C_{{\gamma},\rho}}[a,b].
\end{eqnarray*}
\end{enumerate}
\end{definition}
\begin{lemma}\label{L0}
Let $n\in\mathbb{N}_{0}$ and $\mu_1,\mu_2\in\mathbb{R}$ such that
\begin{eqnarray*}
0\leq{\mu_1}\leq{\mu_2}<1.
\end{eqnarray*}
Then,
\begin{eqnarray*}
C_{\delta_\rho}^{n}[a,b]\longrightarrow{C_{\delta_\rho,\mu_1}^{n}[a,b]}
\longrightarrow{C_{\delta_\rho,\mu_2}^{n}[a,b]},
\end{eqnarray*}
with
\end{lemma}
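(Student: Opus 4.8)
The plan is to reduce the whole statement to one elementary fact about the weight function and then propagate it through the finite sums that define the norms. Write $w_\gamma(x)=\big((x^{\rho}-a^{\rho})/\rho\big)^{\gamma}$ for the weight appearing in Definition \ref{espacos}. Since $\rho>0$ and $a\le x\le b$, one has $0\le (x^{\rho}-a^{\rho})/\rho\le (b^{\rho}-a^{\rho})/\rho$, so for any exponent $\theta\ge 0$ the function $x\mapsto\big((x^{\rho}-a^{\rho})/\rho\big)^{\theta}$ is bounded on $[a,b]$ by $\big((b^{\rho}-a^{\rho})/\rho\big)^{\theta}$. Applying this with $\theta=\mu_2-\mu_1\ge 0$ gives $w_{\mu_2}(x)=w_{\mu_1}(x)\,\big((x^{\rho}-a^{\rho})/\rho\big)^{\mu_2-\mu_1}\le \big((b^{\rho}-a^{\rho})/\rho\big)^{\mu_2-\mu_1}\,w_{\mu_1}(x)$ for every $x\in(a,b]$, and with $\theta=\mu_1$ (and $w_0\equiv 1$) gives $w_{\mu_1}(x)\le\big((b^{\rho}-a^{\rho})/\rho\big)^{\mu_1}$.

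Next I would turn these pointwise bounds into norm inequalities. For $g\in C_{\mu_1,\rho}[a,b]$, multiplying by $|g(x)|$ and taking the maximum over $\Omega$ yields $\|g\|_{C_{\mu_2,\rho}}\le\big((b^{\rho}-a^{\rho})/\rho\big)^{\mu_2-\mu_1}\|g\|_{C_{\mu_1,\rho}}$, and likewise $\|g\|_{C_{\mu_1,\rho}}\le\big((b^{\rho}-a^{\rho})/\rho\big)^{\mu_1}\|g\|_{C}$ for $g\in C[a,b]$. In particular the left-hand function lies in the right-hand space in each case, so the corresponding set inclusions hold and the inclusion maps are bounded.

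To handle the $C^{n}_{\delta_\rho,\mu}$ spaces I would apply this scalar estimate to each iterated derivative $\delta_\rho^{k}g$. If $g\in C^{n}_{\delta_\rho}[a,b]$ then $\delta_\rho^{k}g\in C[a,b]$ for $k=0,\dots,n$; the summands $\|\delta_\rho^{k}g\|_{C}$ with $k\le n-1$ are common to both norms, while $\|\delta_\rho^{n}g\|_{C_{\mu_1,\rho}}\le\big((b^{\rho}-a^{\rho})/\rho\big)^{\mu_1}\|\delta_\rho^{n}g\|_{C}$, so $\|g\|_{C^{n}_{\delta_\rho,\mu_1}}\le\max\{1,((b^{\rho}-a^{\rho})/\rho)^{\mu_1}\}\,\|g\|_{C^{n}_{\delta_\rho}}$, giving the first embedding. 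For the second, if $g\in C^{n}_{\delta_\rho,\mu_1}[a,b]$ the first $n$ summands of the two norms coincide and the last obeys $\|\delta_\rho^{n}g\|_{C_{\mu_2,\rho}}\le\big((b^{\rho}-a^{\rho})/\rho\big)^{\mu_2-\mu_1}\|\delta_\rho^{n}g\|_{C_{\mu_1,\rho}}$, whence $\|g\|_{C^{n}_{\delta_\rho,\mu_2}}\le\max\{1,((b^{\rho}-a^{\rho})/\rho)^{\mu_2-\mu_1}\}\,\|g\|_{C^{n}_{\delta_\rho,\mu_1}}$. The degenerate case $n=0$ is covered directly, since $C^{0}_{\delta_\rho,\mu}[a,b]=C_{\mu,\rho}[a,b]$ by Definition \ref{espacos} and the claim then reduces to the two bounded inclusions of the previous paragraph.

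There is no genuine obstacle here: this is a soft embedding result, and the only care needed is bookkeeping — keeping the three norm definitions straight, using the hypothesis $0\le\mu_1\le\mu_2<1$ precisely where it is needed (so that the exponents $\mu_2-\mu_1$ and $\mu_1$ are nonnegative and the weight powers are therefore bounded on the compact interval $[a,b]$), and applying the scalar estimate term by term in the finite sums. The embedding constant $\max\{1,((b^{\rho}-a^{\rho})/\rho)^{\mu_2-\mu_1}\}$ (and its analogue for the first arrow) is presumably what the truncated ``with'' at the end of the statement is meant to record.
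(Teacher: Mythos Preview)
Your argument is correct and is exactly the standard one: bound the weight pointwise using the monotonicity of $t\mapsto t^{\theta}$ on the compact range $[0,(b^{\rho}-a^{\rho})/\rho]$ for $\theta\ge 0$, then push this through the finite sums defining the $C^{n}_{\delta_\rho,\mu}$ norms term by term. There is nothing to compare against, because the paper states Lemma~\ref{L0} without proof; the ``with'' you noticed as truncated is simply followed (outside the lemma environment, a formatting quirk) by the asserted norm inequality and the explicit constant.

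One point worth flagging: the paper records the embedding constant as
\[
K_{\delta_\rho}=\min\!\left[1,\left(\frac{b^{\rho}-a^{\rho}}{\rho}\right)^{\mu_2-\mu_1}\right],
\]
whereas your analysis gives $\max\{1,((b^{\rho}-a^{\rho})/\rho)^{\mu_2-\mu_1}\}$. Your version is the correct upper bound: the first $n$ summands of the norm carry constant $1$ and the last summand carries the power, so the overall constant is their maximum. The paper's $\min$ appears to be a typo (a $\min$ cannot serve as an upper bound here unless one of the two quantities dominates for trivial reasons, which is not the case in general).
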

\begin{eqnarray*}
{\lVert f \rVert}_{{C_{\delta_\rho,\mu_2}^{n}[a,b]}}\leq{K_{\delta_{\rho}}}
{\lVert f \rVert}_{{C_{\delta_\rho,\mu_1}^{n}[a,b]}},
\end{eqnarray*}
where
\begin{eqnarray*}
{K_{\delta_{\rho}}}=\min\left[1,\left(\frac{b^{\rho}-a^{\rho}}{\rho}\right)^{\mu_2-\mu_1}\right], 
\quad \mbox{and} \quad a\neq{0}.
\end{eqnarray*}
In particular,
\begin{eqnarray*}
C[a,b]\longrightarrow{C_{\mu_1,\rho}[a,b]}\longrightarrow{C_{\mu_2,\rho}[a,b]},
\end{eqnarray*}
with
\begin{eqnarray*}
{\lVert f \rVert}_{{C_{\mu_2, \rho}[a,b]}}\leq{\left(\frac{b^{\rho}-a^{\rho}}{\rho}\right)^{\mu_2-\mu_1}}
{\lVert f \rVert}_{C_{\mu_1,\rho}[a,b]}, \quad a\neq{0}.
\end{eqnarray*}
\begin{lemma} \label{L1}
Let $0\leq{\gamma}<1$, $a<c<b$, $g\in{C_{\gamma,\rho}}[a,c]$, $g\in{C}[c,b]$ and $g$ 
continuous at $c$. Then, $g\in{C_{\gamma,\rho}}[a,b]$.
\end{lemma}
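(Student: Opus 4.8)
The plan is to show that the weighted function $\left(\frac{x^{\rho}-a^{\rho}}{\rho}\right)^{\gamma} g(x)$ is continuous on all of $[a,b]$, which is exactly the statement that $g \in C_{\gamma,\rho}[a,b]$. First I would split the interval at the point $c$ and treat the two pieces separately. On $(a,c]$ the hypothesis $g \in C_{\gamma,\rho}[a,c]$ gives, by definition, that $h(x) := \left(\frac{x^{\rho}-a^{\rho}}{\rho}\right)^{\gamma} g(x)$ extends to a continuous function on $[a,c]$. On $[c,b]$, since $g$ is continuous there and the prefactor $\left(\frac{x^{\rho}-a^{\rho}}{\rho}\right)^{\gamma}$ is a continuous (indeed smooth, since $a < c \le x$ keeps the base positive) function of $x$, the product $h$ is continuous on $[c,b]$.

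Next I would glue the two pieces. The only issue is matching at the junction point $x = c$: I need the value of the continuous extension of $h$ from the left at $c$ to agree with $h(c)$ computed from the right. Since $g$ is assumed continuous at $c$, the left-hand limit $\lim_{x \to c^-} g(x)$ equals $g(c)$, and the prefactor is continuous at $c$ with finite value $\left(\frac{c^{\rho}-a^{\rho}}{\rho}\right)^{\gamma}$; hence $\lim_{x\to c^-} h(x) = \left(\frac{c^{\rho}-a^{\rho}}{\rho}\right)^{\gamma} g(c) = h(c)$. Therefore the two continuous pieces agree at $c$, and the standard pasting lemma for continuous functions on a union of two closed intervals yields that $h \in C[a,b]$. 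By the definition of the weighted space, this is precisely the assertion $g \in C_{\gamma,\rho}[a,b]$.

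The main thing to be careful about — rather than a genuine obstacle — is the behavior of the weight near $x = a$: on $[c,b]$ the prefactor is bounded and continuous, but the potential blow-up of $g$ is confined to a neighborhood of $a$, which lies strictly inside $[a,c]$, so it is already absorbed by the hypothesis $g \in C_{\gamma,\rho}[a,c]$. One should also note $a \neq 0$ is implicit in $0 < a < b$ from Definition \ref{espacos}, so $\left(\frac{x^\rho - a^\rho}{\rho}\right)^\gamma$ is well defined and the base $x^\rho - a^\rho \ge 0$ on $[a,b]$ with equality only at $x=a$. No estimates beyond these continuity observations are needed, so the proof is essentially an application of the pasting lemma once the weighted reformulation is written down.
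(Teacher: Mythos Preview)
Your proof is correct. The paper states this lemma without proof, so there is nothing to compare against; your argument---showing that the weighted function $h(x)=\left(\frac{x^{\rho}-a^{\rho}}{\rho}\right)^{\gamma}g(x)$ is continuous on each of $[a,c]$ and $[c,b]$ and then invoking the pasting lemma at $x=c$---is the natural one and would serve as the omitted justification.
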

\begin{theorem}{\textnormal{\cite{kilbas}}}\label{fixed-point}
Let $(U,d)$ be a nonempty complete metric space; let $0\leq{\omega}<1$, and let $T:U\rightarrow{U}$
be the map such that, for every $u,v\in{U}$, the relation
\begin{eqnarray}
d(Tu,Tv)\leq{\omega}d(u,v), \quad (0\leq{\omega}<1) \label{condicao}
\end{eqnarray}
holds. Then the operator $T$ has a unique fixed point $u^{*}\in{U}$.\\
Futhermore, if $T^{k}\,(k\in\mathbb{N})$ is the sequence of operators defined by
\begin{eqnarray}
T^{1}=T \quad \mbox{and} \quad T^{k}=TT^{k-1}, \quad (k\in\mathbb{N}\backslash\{1\}),
\end{eqnarray}
then, for any $u_0\in{U}$, the sequence $\{T^{k}{u_0}\}_{k=1}^{\infty}$ converges to the
above fixed point $u^*$.
\end{theorem}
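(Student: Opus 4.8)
The plan is to run the classical Picard iteration argument. Fix an arbitrary $u_0\in U$ and set $u_k=T^k u_0$ for $k\in\mathbb{N}$. The first step is to prove by induction on $k$ that
\begin{eqnarray*}
d(u_{k+1},u_k)\le\omega^{k}\,d(u_1,u_0);
\end{eqnarray*}
the base case is an identity, and the inductive step rewrites $d(u_{k+1},u_k)=d(Tu_k,Tu_{k-1})$ and applies the contraction hypothesis \eqref{condicao}.

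Next I would upgrade this to the statement that $\{u_k\}$ is a Cauchy sequence. For $m>n$, the triangle inequality together with the bound just established gives
\begin{eqnarray*}
d(u_m,u_n)\le\sum_{j=n}^{m-1}d(u_{j+1},u_j)\le\left(\sum_{j=n}^{m-1}\omega^{j}\right)d(u_1,u_0)\le\frac{\omega^{n}}{1-\omega}\,d(u_1,u_0),
\end{eqnarray*}
and since $0\le\omega<1$ the right-hand side tends to $0$ as $n\to\infty$; hence $\{u_k\}$ is Cauchy. By completeness of $(U,d)$ it converges to some $u^{*}\in U$.

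It then remains to check that $u^{*}$ is a fixed point and that it is the only one. Since $T$ satisfies \eqref{condicao} it is Lipschitz, hence continuous, so letting $k\to\infty$ in $u_{k+1}=Tu_k$ yields $u^{*}=Tu^{*}$. If $u^{*}$ and $v^{*}$ are both fixed points, then $d(u^{*},v^{*})=d(Tu^{*},Tv^{*})\le\omega\,d(u^{*},v^{*})$, so $(1-\omega)\,d(u^{*},v^{*})\le 0$, which forces $d(u^{*},v^{*})=0$ and therefore $u^{*}=v^{*}$. The final ``Furthermore'' claim needs no additional argument: the construction above began with an arbitrary $u_0$, so the sequence $\{T^{k}u_0\}_{k=1}^{\infty}$ converges to the unique fixed point $u^{*}$ regardless of the choice of $u_0$.

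There is essentially no serious obstacle in this proof; the only points demanding a moment's care are the geometric-series estimate that passes from control of consecutive terms to the Cauchy property, and the remark that a contraction is automatically continuous, which is what legitimizes moving the limit inside $T$.
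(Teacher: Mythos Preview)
Your proof is correct and is the standard Picard iteration argument for the Banach contraction mapping theorem. The paper itself does not prove this statement; it merely cites it from \cite{kilbas} as a known result in the preliminaries section, so there is no in-paper argument to compare against. Your write-up is complete and self-contained, and the two points you flag (the geometric-series estimate and the continuity of $T$) are indeed the only places where any justification is needed.
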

The map $T:U\rightarrow{U}$ satisfying condition \textnormal{Eq.(\ref{condicao})}
is called a contractive map.
\begin{definition}{\textnormal{\cite{katugampola1}}}\label{generalized-integral}
Let $\alpha,c\in\mathbb{R}$ with $\alpha>0$ and $\varphi\in{X}_{c}^{p}(a,b)$, 
where ${X}_{c}^{p}(a,b)$ consists of those complex-valued Lebesgue measurable
functions. The generalized fractional integrals, left- and right-sided,
are defined, respectively, by
\begin{eqnarray}
(^{\rho}\mathcal{J}_{a^+}^{\alpha}\varphi)(x)=\frac{\rho^{1-\alpha}}{\Gamma(\alpha)}
\int_{a}^{x}\frac{t^{\rho-1}\,\varphi(t)}{(x^{\rho}-t^{\rho})^{1-\alpha}}dt, \qquad x>a \label{int-gen-esq}
\end{eqnarray}
and
\begin{eqnarray}
(^{\rho}\mathcal{J}_{b^-}^{\alpha}\varphi)(x)=\frac{\rho^{1-\alpha}}{\Gamma(\alpha)}
\int_{x}^{b}\frac{t^{\rho-1}\,\varphi(t)}{(t^{\rho}-x^{\rho})^{1-\alpha}}dt, \qquad x<b \label{int-gen-dir}
\end{eqnarray}
with $\rho>0$.
\end{definition}
Similarly, we define generalized fractional derivatives which correspond to
generalized fractional integrals, \textnormal{Eq.{(\ref{int-gen-esq})}} and 
\textnormal{Eq.{(\ref{int-gen-dir})}}.
\begin{definition}{\textnormal{\cite{katugampola}}}\label{der-gen}
Let $\alpha, \rho \in\mathbb{R}$ such that $\alpha, \rho >{0}$, $\alpha\notin\mathbb{N}$, 
and let $n=[\alpha]+1$, where 
$[\alpha]$ is the integer part of $\alpha$. The generalized 
fractional derivatives, $(^{\rho}\mathcal{D}_{a^+}^{\alpha}\varphi)(x)$ 
and $(^{\rho}\mathcal{D}_{a^-}^{\alpha}\varphi)(x)$, left- and right-sided, 
are defined by
\begin{eqnarray}
(^{\rho}\mathcal{D}_{a^+}^{\alpha}\varphi)(x)&=&\delta_{\rho}^{n}(^{\rho}
\mathcal{J}_{a^+}^{n-\alpha}\varphi)(x)\nonumber\\
&=&\frac{\rho^{1-n+\alpha}}{\Gamma(n-\alpha)}\left(x^{1-\rho}\frac{d}{dx}\right)^{n}
\int_{a}^{x}\frac{t^{\rho-1}\,\varphi(t)}{(x^{\rho}-t^{\rho})^{1-n+\alpha}}dt, \label{der-gen-esq}
\end{eqnarray}
and
\begin{eqnarray}
(^{\rho}\mathcal{D}_{a^-}^{\alpha}\varphi)(x)&=&(-1)^{n}\delta_{\rho}^{n}(^{\rho}
\mathcal{J}_{a^-}^{n-\alpha}\varphi)(x)\nonumber\\
&=&\frac{(-1)^{n}\rho^{1-n+\alpha}}{\Gamma(n-\alpha)}\left(x^{1-\rho}\frac{d}{dx}\right)^{n}
\int_{x}^{b}\frac{t^{\rho-1}\,\varphi(t)}{(x^{\rho}-t^{\rho})^{1-n+\alpha}}dt,
\end{eqnarray}
respectively, if the integrals exist and $\displaystyle {\delta}_{\rho}^{n}=
\left(x^{1-\rho}\frac{d}{dx}\right)^{n}$.
\end{definition}
\begin{theorem}\label{semigrupo}
Let $\alpha>0$, $\beta>0$, $1\leq{p}\leq{\infty}$, $0<a<b<\infty$ and $\rho,c\in\mathbb{R}$,  
$\rho\geq{c}$. Then, for $\varphi\in{X}_{c}^{p}(a,b)$ the semigroup property is valid, i.e.\  

\begin{eqnarray*}
(^{\rho}\mathcal{J}_{a^+}^{\alpha}\,^{\rho}\mathcal{J}_{a^+}^{\beta}\varphi)(x)=
(^{\rho}\mathcal{J}_{a^+}^{\alpha+\beta}\varphi)(x).
\end{eqnarray*}
\begin{proof}
See \textnormal{\cite{katugampola1}}.
\end{proof}
\end{theorem}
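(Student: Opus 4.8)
The plan is to unfold the composition into an iterated integral, interchange the order of integration by Fubini's theorem, and evaluate the resulting inner integral by reducing it to an Euler Beta integral. Substituting the definition \eqref{int-gen-esq} of $^{\rho}\mathcal{J}_{a^+}^{\beta}\varphi$ into the definition of $^{\rho}\mathcal{J}_{a^+}^{\alpha}$, one obtains
\[
(^{\rho}\mathcal{J}_{a^+}^{\alpha}\,^{\rho}\mathcal{J}_{a^+}^{\beta}\varphi)(x)
=\frac{\rho^{2-\alpha-\beta}}{\Gamma(\alpha)\Gamma(\beta)}
\int_{a}^{x}\int_{a}^{t}\frac{t^{\rho-1}s^{\rho-1}\,\varphi(s)}
{(x^{\rho}-t^{\rho})^{1-\alpha}(t^{\rho}-s^{\rho})^{1-\beta}}\,ds\,dt .
\]

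Since $\alpha,\beta>0$ and $\varphi\in X_{c}^{p}(a,b)$ with $\rho\geq c$, the integrand is absolutely integrable over the triangle $\{(s,t):a\leq s\leq t\leq x\}$; this is the one point that needs genuine care, and I would handle it either by a H\"older / generalized Minkowski estimate exploiting the Mellin-convolution structure of the generalized integral, or --- more cheaply --- by first establishing the identity for continuous $\varphi$ and then extending to $X_{c}^{p}(a,b)$ by density, using that $^{\rho}\mathcal{J}_{a^+}^{\alpha}$ is a bounded operator on $X_{c}^{p}(a,b)$. Granting absolute integrability, Fubini's theorem gives
\[
(^{\rho}\mathcal{J}_{a^+}^{\alpha}\,^{\rho}\mathcal{J}_{a^+}^{\beta}\varphi)(x)
=\frac{\rho^{2-\alpha-\beta}}{\Gamma(\alpha)\Gamma(\beta)}
\int_{a}^{x}s^{\rho-1}\varphi(s)\left(\int_{s}^{x}
\frac{t^{\rho-1}}{(x^{\rho}-t^{\rho})^{1-\alpha}(t^{\rho}-s^{\rho})^{1-\beta}}\,dt\right)ds .
\]

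Finally, in the inner integral I would substitute $u=(t^{\rho}-s^{\rho})/(x^{\rho}-s^{\rho})$, so that $t^{\rho-1}\,dt=\rho^{-1}(x^{\rho}-s^{\rho})\,du$, $x^{\rho}-t^{\rho}=(x^{\rho}-s^{\rho})(1-u)$ and $t^{\rho}-s^{\rho}=(x^{\rho}-s^{\rho})u$, with $u$ running from $0$ to $1$. This turns the inner integral into
\[
\frac{(x^{\rho}-s^{\rho})^{\alpha+\beta-1}}{\rho}\int_{0}^{1}u^{\beta-1}(1-u)^{\alpha-1}\,du
=\frac{(x^{\rho}-s^{\rho})^{\alpha+\beta-1}}{\rho}\cdot\frac{\Gamma(\alpha)\Gamma(\beta)}{\Gamma(\alpha+\beta)} ,
\]
by the Beta integral. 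Inserting this back, the factors $\Gamma(\alpha)\Gamma(\beta)$ cancel and $\rho^{2-\alpha-\beta}\rho^{-1}=\rho^{1-\alpha-\beta}$, leaving
\[
\frac{\rho^{1-\alpha-\beta}}{\Gamma(\alpha+\beta)}\int_{a}^{x}\frac{s^{\rho-1}\varphi(s)}{(x^{\rho}-s^{\rho})^{1-(\alpha+\beta)}}\,ds
=(^{\rho}\mathcal{J}_{a^+}^{\alpha+\beta}\varphi)(x),
\]
which is the asserted identity. The right-sided statement follows from the same computation after the orientation-reversing substitution $u=(x^{\rho}-t^{\rho})/(x^{\rho}-s^{\rho})$. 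The only real obstacle is the Fubini justification; the remainder is a change of variables together with the identity $B(\alpha,\beta)=\Gamma(\alpha)\Gamma(\beta)/\Gamma(\alpha+\beta)$.
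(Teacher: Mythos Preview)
Your argument is correct and is precisely the standard Fubini--Beta-integral computation one finds in Katugampola's original paper. The present paper does not give its own proof at all; it simply cites \cite{katugampola1}, so there is no in-house argument to compare against, but what you have written is essentially what that reference contains.
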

\begin{lemma} \label{L2}
Let $x>a$, ${^{\rho}\mathcal{J}_{a^+}^{\alpha}}$ and ${^{\rho}\mathcal{D}_{a^+}^{\alpha}}$,
as defined in \textnormal{Eq.(\ref{int-gen-esq})} and \textnormal{Eq.(\ref{der-gen-esq})},
respectively. Then, for $\alpha\geq{0}$ and $\xi>0$, we have
\begin{eqnarray*}
\left[{^{\rho}\mathcal{J}_{a^+}^{\alpha}}\left(\frac{t^{\rho}-a^{\rho}}{\rho}\right)^{\xi-1}\right](x)&=&\frac{\Gamma(\xi)}{\Gamma(\alpha+\xi)}\left(\frac{x^{\rho}-a^{\rho}}{\rho}\right)^{\alpha+\xi-1},\\
\left[{^{\rho}\mathcal{D}_{a^+}^{\alpha}}\left(\frac{t^{\rho}-a^{\rho}}{\rho}\right)^{\alpha-1}\right](x)&=&0, \quad 0<\alpha<1.
\end{eqnarray*}
\begin{proof}
See \textnormal{\cite{almeida}}.
\end{proof}
\end{lemma}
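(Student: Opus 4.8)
The plan is to prove the first identity by a direct computation from Definition \ref{generalized-integral} and then to obtain the second identity as an immediate consequence. For the first identity I would substitute $\varphi(t)=\left(\frac{t^{\rho}-a^{\rho}}{\rho}\right)^{\xi-1}$ into Eq.(\ref{int-gen-esq}), getting
\begin{eqnarray*}
\left[{^{\rho}\mathcal{J}_{a^+}^{\alpha}}\left(\frac{t^{\rho}-a^{\rho}}{\rho}\right)^{\xi-1}\right](x)=\frac{\rho^{1-\alpha}}{\Gamma(\alpha)}\int_{a}^{x}\frac{t^{\rho-1}}{(x^{\rho}-t^{\rho})^{1-\alpha}}\left(\frac{t^{\rho}-a^{\rho}}{\rho}\right)^{\xi-1}dt,
\end{eqnarray*}
and then perform the change of variables $u=\dfrac{t^{\rho}-a^{\rho}}{x^{\rho}-a^{\rho}}$, which (since $\rho>0$ and $a<x$) is a smooth increasing bijection of $[a,x]$ onto $[0,1]$ satisfying $t^{\rho-1}\,dt=\dfrac{x^{\rho}-a^{\rho}}{\rho}\,du$, $\;x^{\rho}-t^{\rho}=(x^{\rho}-a^{\rho})(1-u)$ and $t^{\rho}-a^{\rho}=u(x^{\rho}-a^{\rho})$.

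After this substitution the integral collapses to a constant multiple of $\int_{0}^{1}u^{\xi-1}(1-u)^{\alpha-1}\,du=B(\xi,\alpha)$. Collecting the powers of $\rho$ and of $(x^{\rho}-a^{\rho})$ — they combine into $\rho^{-(\alpha+\xi-1)}$ and $(x^{\rho}-a^{\rho})^{\alpha+\xi-1}$ respectively — and using $B(\xi,\alpha)=\dfrac{\Gamma(\xi)\Gamma(\alpha)}{\Gamma(\xi+\alpha)}$, the factor $\Gamma(\alpha)$ cancels and one is left with exactly $\dfrac{\Gamma(\xi)}{\Gamma(\alpha+\xi)}\left(\dfrac{x^{\rho}-a^{\rho}}{\rho}\right)^{\alpha+\xi-1}$. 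The hypotheses $\xi>0$ and $\alpha>0$ ensure convergence of the Beta integral at both endpoints; the borderline case $\alpha=0$ is treated separately by observing that $^{\rho}\mathcal{J}_{a^+}^{0}$ is the identity, for which the claimed formula holds trivially.

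For the second identity, note that $0<\alpha<1$ forces $n=[\alpha]+1=1$ in Definition \ref{der-gen}, so that $^{\rho}\mathcal{D}_{a^+}^{\alpha}=\delta_{\rho}\circ{}^{\rho}\mathcal{J}_{a^+}^{1-\alpha}$. Applying the first identity with exponent $\xi=\alpha$ and integration order $1-\alpha$ gives
\begin{eqnarray*}
\left[{^{\rho}\mathcal{J}_{a^+}^{1-\alpha}}\left(\frac{t^{\rho}-a^{\rho}}{\rho}\right)^{\alpha-1}\right](x)=\frac{\Gamma(\alpha)}{\Gamma(1)}\left(\frac{x^{\rho}-a^{\rho}}{\rho}\right)^{0}=\Gamma(\alpha),
\end{eqnarray*}
which is constant in $x$; since $\delta_{\rho}$ involves one differentiation in $x$, it annihilates this constant and the result is $0$.

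I expect no serious obstacle: the only points that need care are verifying that the substitution $u=(t^{\rho}-a^{\rho})/(x^{\rho}-a^{\rho})$ is legitimate (monotonicity and the Jacobian, both resting on $\rho>0$ and $a<x$) and bookkeeping the several powers of $\rho$ so that they merge correctly. Once the integrand is recognized as a Beta integral, the identification of the Gamma factors and the final cancellation of $\Gamma(\alpha)$ are purely mechanical.
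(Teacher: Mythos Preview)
Your proposal is correct. The paper does not supply its own argument for this lemma but simply refers the reader to \cite{almeida}; your direct Beta-integral computation via the substitution $u=(t^{\rho}-a^{\rho})/(x^{\rho}-a^{\rho})$ is precisely the standard derivation one finds in that reference, and your deduction of the second identity from the first by writing ${^{\rho}\mathcal{D}_{a^+}^{\alpha}}=\delta_{\rho}\,{^{\rho}\mathcal{J}_{a^+}^{1-\alpha}}$ is likewise the expected route.
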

\begin{lemma}\label{Lem-2}
For $\alpha>0$, ${^{\rho}\mathcal{J}_{a^+}^{\alpha}}$ maps ${C}[a,b]$ into ${C}[a,b]$.
\end{lemma}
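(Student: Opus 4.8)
The plan is to reduce the generalized fractional integral to a Riemann--Liouville-type integral with a fixed singularity by a change of variables, and then deduce continuity from the dominated convergence theorem.

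First I would perform the substitution $t^{\rho}=a^{\rho}+(x^{\rho}-a^{\rho})\tau$ in the defining integral Eq.(\ref{int-gen-esq}). Since then $\rho\,t^{\rho-1}\,dt=(x^{\rho}-a^{\rho})\,d\tau$ and $x^{\rho}-t^{\rho}=(x^{\rho}-a^{\rho})(1-\tau)$, a direct computation gives, for every $x\in(a,b]$,
\[
(^{\rho}\mathcal{J}_{a^+}^{\alpha}\varphi)(x)=\frac{1}{\Gamma(\alpha)}\left(\frac{x^{\rho}-a^{\rho}}{\rho}\right)^{\alpha}\int_{0}^{1}(1-\tau)^{\alpha-1}\,\varphi\!\left(\big(a^{\rho}+(x^{\rho}-a^{\rho})\tau\big)^{1/\rho}\right)d\tau,
\]
with the right-hand side equal to $0$ when $x=a$, consistent with $(^{\rho}\mathcal{J}_{a^+}^{\alpha}\varphi)(a)=0$. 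Because $\varphi\in C[a,b]$ is bounded and $\alpha>0$, the integrand is dominated by $\lVert\varphi\rVert_{C}\,(1-\tau)^{\alpha-1}\in L^{1}(0,1)$, so the integral is finite for each $x$ and $^{\rho}\mathcal{J}_{a^+}^{\alpha}\varphi$ is well defined and bounded on $[a,b]$.

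Next I would fix $x_{0}\in[a,b]$ and an arbitrary sequence $x_{m}\to x_{0}$ in $[a,b]$, and show $(^{\rho}\mathcal{J}_{a^+}^{\alpha}\varphi)(x_{m})\to(^{\rho}\mathcal{J}_{a^+}^{\alpha}\varphi)(x_{0})$. The prefactor $\big((x^{\rho}-a^{\rho})/\rho\big)^{\alpha}$ is manifestly continuous on $[a,b]$, so it converges. For the integral, set $g_{m}(\tau)=(1-\tau)^{\alpha-1}\varphi\big((a^{\rho}+(x_{m}^{\rho}-a^{\rho})\tau)^{1/\rho}\big)$; for each fixed $\tau\in(0,1)$ the map $x\mapsto(a^{\rho}+(x^{\rho}-a^{\rho})\tau)^{1/\rho}$ is continuous with values in $[a,b]$, and $\varphi$ is continuous, so $g_{m}(\tau)\to g_{0}(\tau)$ pointwise, with the uniform bound $|g_{m}(\tau)|\le\lVert\varphi\rVert_{C}(1-\tau)^{\alpha-1}$. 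The dominated convergence theorem then gives $\int_{0}^{1}g_{m}\to\int_{0}^{1}g_{0}$, and multiplying the two convergent factors shows $^{\rho}\mathcal{J}_{a^+}^{\alpha}\varphi$ is continuous at $x_{0}$; since $x_{0}$ was arbitrary, $^{\rho}\mathcal{J}_{a^+}^{\alpha}\varphi\in C[a,b]$.

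I expect the only delicate point to be the behaviour near the lower endpoint $x=a$: there the integral factor stays bounded while the weight $\big((x^{\rho}-a^{\rho})/\rho\big)^{\alpha}$ tends to $0$, so the product tends to $0=(^{\rho}\mathcal{J}_{a^+}^{\alpha}\varphi)(a)$. The change of variables is exactly what makes this transparent, since it freezes the otherwise moving singularity of the kernel at $\tau=1$ and confines all the $x$-dependence to the continuous prefactor and a dominated integrand. Alternatively one could substitute $s=t^{\rho}$ to rewrite the operator as a classical Riemann--Liouville integral, in the variable $x^{\rho}$, of the continuous function $s\mapsto\varphi(s^{1/\rho})$, and invoke the known fact that Riemann--Liouville integration preserves continuity, composed with the continuous map $x\mapsto x^{\rho}$; but the self-contained dominated-convergence argument above seems cleanest.
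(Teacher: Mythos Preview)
Your argument is correct. The paper itself states Lemma~\ref{Lem-2} without proof (it is listed among the preliminary results in Section~\ref{sec:2} and left as a known fact), so there is no ``paper's proof'' to compare against directly.

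Your approach---freezing the moving singularity of the kernel via the substitution $t^{\rho}=a^{\rho}+(x^{\rho}-a^{\rho})\tau$ and then appealing to dominated convergence---is a clean and self-contained way to establish the result. The computation of the change of variables is correct, the domination by $\lVert\varphi\rVert_{C}(1-\tau)^{\alpha-1}\in L^{1}(0,1)$ is valid for $\alpha>0$, and the treatment of the endpoint $x=a$ is handled properly. The alternative you mention at the end (substituting $s=t^{\rho}$ to reduce to a classical Riemann--Liouville integral of a continuous function, then composing with $x\mapsto x^{\rho}$) is in fact the standard route in the literature for results of this type about the Katugampola integral, and is likely what the paper has in mind by omitting the proof; your dominated-convergence argument is a slightly more explicit unpacking of the same idea.
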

\begin{lemma} \label{Lem-3}
Let $\alpha>0$ and $0\leq\gamma<{1}$. Then, ${^{\rho}\mathcal{J}_{a^+}^{\alpha}}$ is
bounded from ${C}_{\gamma,\rho}[a,b]$ into ${C}_{\gamma,\rho}[a,b]$.
\end{lemma}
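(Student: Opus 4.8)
The plan is to strip the kernel by a change of variables and then recognise a Beta integral. First I would write an arbitrary $g\in C_{\gamma,\rho}[a,b]$ as
$g(x)=\left(\frac{x^{\rho}-a^{\rho}}{\rho}\right)^{-\gamma}h(x)$, where $h(x):=\left(\frac{x^{\rho}-a^{\rho}}{\rho}\right)^{\gamma}g(x)\in C[a,b]$ and $\norm{h}_{C}=\norm{g}_{C_{\gamma,\rho}}$. Substituting this into Eq.(\ref{int-gen-esq}) and applying the change of variables $t^{\rho}=a^{\rho}+(x^{\rho}-a^{\rho})u$, $u\in[0,1]$ (so that $t^{\rho-1}\,dt=\frac{x^{\rho}-a^{\rho}}{\rho}\,du$, $x^{\rho}-t^{\rho}=(x^{\rho}-a^{\rho})(1-u)$, and $\frac{t^{\rho}-a^{\rho}}{\rho}=\frac{x^{\rho}-a^{\rho}}{\rho}\,u$), a short computation collecting the powers of $\rho$ and of $x^{\rho}-a^{\rho}$ yields
\[
\left(\frac{x^{\rho}-a^{\rho}}{\rho}\right)^{\gamma}\!\big(^{\rho}\mathcal{J}_{a^+}^{\alpha}g\big)(x)
=\frac{1}{\Gamma(\alpha)}\left(\frac{x^{\rho}-a^{\rho}}{\rho}\right)^{\alpha}\int_{0}^{1}(1-u)^{\alpha-1}u^{-\gamma}\,h\big(t(u;x)\big)\,du,
\]
where $t(u;x)=\big(a^{\rho}+(x^{\rho}-a^{\rho})u\big)^{1/\rho}$.

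From this identity boundedness is immediate: since $|h(t(u;x))|\le\norm{h}_{C}$, since $\int_{0}^{1}(1-u)^{\alpha-1}u^{-\gamma}\,du=\frac{\Gamma(1-\gamma)\Gamma(\alpha)}{\Gamma(\alpha+1-\gamma)}$ is finite (here is where $\alpha>0$ and $\gamma<1$ are used), and since $\left(\frac{x^{\rho}-a^{\rho}}{\rho}\right)^{\alpha}\le\left(\frac{b^{\rho}-a^{\rho}}{\rho}\right)^{\alpha}$ on $[a,b]$, one obtains
\[
\norm{^{\rho}\mathcal{J}_{a^+}^{\alpha}g}_{C_{\gamma,\rho}}\le\frac{\Gamma(1-\gamma)}{\Gamma(\alpha+1-\gamma)}\left(\frac{b^{\rho}-a^{\rho}}{\rho}\right)^{\alpha}\norm{g}_{C_{\gamma,\rho}}.
\]
(The same pointwise bound can alternatively be read off from Lemma \ref{L2} with $\xi=1-\gamma$.) For the membership claim $^{\rho}\mathcal{J}_{a^+}^{\alpha}g\in C_{\gamma,\rho}[a,b]$ I would show the right-hand side of the displayed identity is continuous on $[a,b]$: for each fixed $u\in(0,1)$ the map $x\mapsto h(t(u;x))$ is continuous (a composition of continuous maps), the integrand is dominated by the fixed integrable function $\norm{h}_{C}(1-u)^{\alpha-1}u^{-\gamma}$, and $x\mapsto\left(\frac{x^{\rho}-a^{\rho}}{\rho}\right)^{\alpha}$ is continuous; hence the dominated convergence theorem gives continuity of the product on all of $[a,b]$, and in particular it vanishes at $x=a$. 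Together with the inequality above, this proves $^{\rho}\mathcal{J}_{a^+}^{\alpha}$ is bounded from $C_{\gamma,\rho}[a,b]$ into itself.

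The step I expect to be most delicate is the continuity assertion near the left endpoint $x=a$: both the weight $u^{-\gamma}$ and $g$ itself are singular there, so one must verify that the singularity is integrable — which is exactly the content of the hypothesis $\gamma<1$ — and that the limit may be taken inside the integral. The change of variables is precisely what makes this transparent, since it moves all the $x$-dependence out of the singular factor and into the bounded continuous factor $h(t(u;x))$, leaving a fixed integrable weight $(1-u)^{\alpha-1}u^{-\gamma}$ to which dominated convergence applies directly.
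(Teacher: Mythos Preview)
Your argument is correct. The change of variables $t^{\rho}=a^{\rho}+(x^{\rho}-a^{\rho})u$ does exactly what you say: it factors out all of the $x$-dependence from the singular part of the integrand, leaving the fixed integrable weight $(1-u)^{\alpha-1}u^{-\gamma}$ and the bounded continuous factor $h(t(u;x))$. The Beta evaluation gives the explicit bound
\[
\norm{^{\rho}\mathcal{J}_{a^+}^{\alpha}g}_{C_{\gamma,\rho}}\le\frac{\Gamma(1-\gamma)}{\Gamma(\alpha+1-\gamma)}\left(\frac{b^{\rho}-a^{\rho}}{\rho}\right)^{\alpha}\norm{g}_{C_{\gamma,\rho}},
\]
and the dominated convergence step for continuity on $[a,b]$ (including at $x=a$, where the prefactor kills the bounded integral) is sound.

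As for the comparison: the paper states Lemma~\ref{Lem-3} in the Preliminaries without proof, treating it as a known fact about the generalized Katugampola integral (analogous results for the Riemann--Liouville and Hadamard cases appear in the cited monograph of Kilbas--Srivastava--Trujillo). So there is no proof in the paper to compare against. Your write-up supplies precisely the missing argument, and the explicit operator-norm constant you obtain is the same one that is used implicitly later in the paper, for instance in the contraction estimate \textnormal{Eq.(\ref{hypothesis})} of Theorem~\ref{exis&uniq}. Your parenthetical remark that the pointwise bound also follows from Lemma~\ref{L2} with $\xi=1-\gamma$ is the quickest route to the norm inequality if one is willing to take that lemma for granted; your direct computation has the advantage of simultaneously establishing the continuity on $[a,b]$, which Lemma~\ref{L2} alone does not give.
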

\begin{lemma}
Let $\alpha>0$ and $0\leq\gamma<{1}$. If $\gamma\leq{\alpha}$, then ${^{\rho}\mathcal{J}_{a^+}^{\alpha}}$
is bounded from ${C}_{\gamma,\rho}[a,b]$ into ${C}[a,b]$.
\end{lemma}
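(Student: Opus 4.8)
The plan is to obtain the boundedness directly from the explicit integration formula in Lemma~\ref{L2}, and then to establish continuity on $[a,b]$, the left endpoint $x=a$ being the only delicate point.

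First I would fix $g\in C_{\gamma,\rho}[a,b]$. By definition of the weighted norm, $|g(t)|\le\|g\|_{C_{\gamma,\rho}}\left(\frac{t^{\rho}-a^{\rho}}{\rho}\right)^{-\gamma}$ for all $t\in(a,b]$. Since $|g(t)|$ is controlled in this way and the kernel in Eq.~(\ref{int-gen-esq}) is nonnegative, monotonicity of the integral followed by Lemma~\ref{L2} with $\xi=1-\gamma$ (admissible because $0\le\gamma<1$ forces $\xi>0$ and $\xi-1=-\gamma$) gives, for every $x\in(a,b]$,
\begin{equation*}
\big|({}^{\rho}\mathcal{J}_{a^+}^{\alpha}g)(x)\big|\le\|g\|_{C_{\gamma,\rho}}\Big({}^{\rho}\mathcal{J}_{a^+}^{\alpha}\big(\tfrac{t^{\rho}-a^{\rho}}{\rho}\big)^{-\gamma}\Big)(x)=\frac{\Gamma(1-\gamma)}{\Gamma(\alpha+1-\gamma)}\left(\frac{x^{\rho}-a^{\rho}}{\rho}\right)^{\alpha-\gamma}\|g\|_{C_{\gamma,\rho}},
\end{equation*}
so in particular the defining integral converges absolutely. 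Now the hypothesis $\gamma\le\alpha$ enters: the exponent $\alpha-\gamma$ is nonnegative, hence $x\mapsto\left(\frac{x^{\rho}-a^{\rho}}{\rho}\right)^{\alpha-\gamma}$ is nondecreasing on $[a,b]$ and bounded above by $\left(\frac{b^{\rho}-a^{\rho}}{\rho}\right)^{\alpha-\gamma}$, which yields
\begin{equation*}
\|{}^{\rho}\mathcal{J}_{a^+}^{\alpha}g\|_{C}\le\frac{\Gamma(1-\gamma)}{\Gamma(\alpha+1-\gamma)}\left(\frac{b^{\rho}-a^{\rho}}{\rho}\right)^{\alpha-\gamma}\|g\|_{C_{\gamma,\rho}},
\end{equation*}
i.e.\ the asserted bound, provided ${}^{\rho}\mathcal{J}_{a^+}^{\alpha}g$ actually belongs to $C[a,b]$.

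It remains to check continuity. On $(a,b]$ this comes for free: by Lemma~\ref{Lem-3}, ${}^{\rho}\mathcal{J}_{a^+}^{\alpha}g\in C_{\gamma,\rho}[a,b]$, that is $\left(\frac{x^{\rho}-a^{\rho}}{\rho}\right)^{\gamma}({}^{\rho}\mathcal{J}_{a^+}^{\alpha}g)(x)$ is continuous on $[a,b]$, and dividing by the continuous, strictly positive weight on $(a,b]$ transfers the continuity to ${}^{\rho}\mathcal{J}_{a^+}^{\alpha}g$ there. The main obstacle is continuity at $x=a$: in the borderline case $\gamma=\alpha$ the estimate above only gives a bound near $a$, not decay, so one cannot simply pass to the limit $x\to a^{+}$. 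I would treat all cases simultaneously by writing $g(t)=\left(\frac{t^{\rho}-a^{\rho}}{\rho}\right)^{-\gamma}h(t)$ with $h\in C[a,b]$ and decomposing $h(t)=h(a)+\big(h(t)-h(a)\big)$. By Lemma~\ref{L2} and linearity of the operator, the constant part contributes exactly $h(a)\frac{\Gamma(1-\gamma)}{\Gamma(\alpha+1-\gamma)}\left(\frac{x^{\rho}-a^{\rho}}{\rho}\right)^{\alpha-\gamma}$, a function continuous on all of $[a,b]$ because $\alpha-\gamma\ge0$. Writing $R(x)$ for the contribution of $h(t)-h(a)$: since $R$ equals ${}^{\rho}\mathcal{J}_{a^+}^{\alpha}g$ minus the everywhere-continuous constant-part contribution, $R$ is continuous on $(a,b]$; and given $\varepsilon>0$, continuity of $h$ at $a$ yields $\delta>0$ with $|h(t)-h(a)|<\varepsilon$ on $[a,a+\delta]$, so bounding $|h(t)-h(a)|$ by $\varepsilon$ inside the integral defining $R$ and invoking Lemma~\ref{L2} once more gives $|R(x)|\le\varepsilon\frac{\Gamma(1-\gamma)}{\Gamma(\alpha+1-\gamma)}\left(\frac{b^{\rho}-a^{\rho}}{\rho}\right)^{\alpha-\gamma}$ for $a<x<a+\delta$. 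Hence $R(x)\to0$ as $x\to a^{+}$; setting $R(a)=0$ makes $R$ continuous on $[a,b]$, so ${}^{\rho}\mathcal{J}_{a^+}^{\alpha}g$, being the sum of $R$ and the continuous constant-part contribution, is continuous on $[a,b]$. This completes the argument; alternatively, Lemma~\ref{L1} could be used to splice the behaviour near $a$ with the behaviour on a subinterval $[c,b]$.
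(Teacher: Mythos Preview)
The paper states this lemma without proof. Your pointwise estimate via Lemma~\ref{L2} is exactly the device the paper uses in the proof of the adjacent Lemma~\ref{L3} (the strict case $\alpha>\gamma$), so in that sense your approach coincides with the paper's. Where you go further is in the continuity argument: the paper's proof of Lemma~\ref{L3} only shows that the bound tends to zero at $a$ when $\alpha>\gamma$, leaving continuity on $(a,b]$ implicit and not touching the borderline case $\alpha=\gamma$ at all. Your decomposition $h(t)=h(a)+\big(h(t)-h(a)\big)$ handles that borderline case correctly---the constant part produces a genuinely nonzero continuous extension at $a$ (namely $h(a)\Gamma(1-\gamma)$ when $\alpha=\gamma$), while the $\varepsilon$--$\delta$ argument forces the remainder to vanish there. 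This is a cleaner and more complete treatment than the paper supplies, and it makes explicit that in the equality case the value at $a$ is obtained by continuous extension rather than by the integral $\int_a^a=0$.
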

\begin{lemma} \label{L3}
Let $0<a<b<\infty$, $\alpha>0$, $0\leq{\gamma}<{1}$ and $\varphi\in{C_{\gamma,\rho}[a,b]}$. 
If $\alpha>\gamma$, then ${^{\rho}\mathcal{J}_{a^+}^{\alpha}}\varphi$ is continuous on $[a,b]$ and
\begin{eqnarray*}
({^{\rho}\mathcal{J}_{a^+}^{\alpha}}\varphi)(a)=\lim_{x \to a^+}({^{\rho}\mathcal{J}_{a^+}^{\alpha}}\varphi)(x)=0.
\end{eqnarray*}
\begin{proof}
Since $\varphi\in{C_{\gamma,\rho}[a,b]}$, then $\displaystyle\left
(\frac{x^{\rho}-a^{\rho}}{\rho}\right)^{\gamma}\varphi(x)$ is continuous on 
$[a,b]$ and
\begin{eqnarray*}
\bigg|\left(\frac{x^{\rho}-a^{\rho}}{\rho}\right)^{\gamma}\varphi(x)\bigg|\leq{M},
	\quad x\in[a,b] , 
\end{eqnarray*}
for some positive constant $M$. Consequently,
\begin{eqnarray*}
|({^{\rho}\mathcal{J}_{a^+}^{\alpha}}\varphi)(x)|\leq{M}\left[{^{\rho}
\mathcal{J}_{a^+}^{\alpha}}\left(\frac{t^{\rho}-a^{\rho}}{\rho}\right)^{-\gamma}\right](x)
\end{eqnarray*}
and by \textnormal{Lemma \ref{L2}}, we can write
\begin{eqnarray}
|({^{\rho}\mathcal{J}_{a^+}^{\alpha}}\varphi)(x)|\leq{M}\frac{\Gamma(1-\gamma)}
{\Gamma(\alpha-\gamma+1)}\left(\frac{x^{\rho}-a^{\rho}}{\rho}\right)^{\alpha-\gamma}. 
\label{integral}
\end{eqnarray}
As $\alpha>\gamma$, the right-hand side of \textnormal{Eq.(\ref{integral})} 
goes to zero when $x \rightarrow a^+$.
\end{proof}
\end{lemma}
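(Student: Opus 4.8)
The plan is to split the statement into two independent pieces: the continuity of ${}^{\rho}\mathcal{J}_{a^+}^{\alpha}\varphi$ on the whole interval $[a,b]$, and the vanishing of this function at the left endpoint $a$.

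For the continuity I would invoke the boundedness lemma stated immediately before this one: since $\gamma<\alpha$ by hypothesis, in particular $\gamma\le\alpha$, and that lemma then guarantees that ${}^{\rho}\mathcal{J}_{a^+}^{\alpha}$ maps $C_{\gamma,\rho}[a,b]$ into $C[a,b]$. Applying it to $\varphi\in C_{\gamma,\rho}[a,b]$ gives ${}^{\rho}\mathcal{J}_{a^+}^{\alpha}\varphi\in C[a,b]$, that is, continuity on $[a,b]$ with the endpoint $a$ included. (Alternatively, Lemma~\ref{Lem-3} already gives ${}^{\rho}\mathcal{J}_{a^+}^{\alpha}\varphi\in C_{\gamma,\rho}[a,b]$; dividing by the weight $\left(\frac{x^{\rho}-a^{\rho}}{\rho}\right)^{\gamma}$, which is continuous and strictly positive on $(a,b]$, recovers continuity on $(a,b]$, and the endpoint is then absorbed by the limit computed below.)

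For the boundary value the idea is a crude pointwise estimate. Since $\varphi\in C_{\gamma,\rho}[a,b]$, the function $\left(\frac{x^{\rho}-a^{\rho}}{\rho}\right)^{\gamma}\varphi(x)$ is continuous on the compact interval $[a,b]$ and hence bounded: there is $M>0$ with $\left|\left(\frac{t^{\rho}-a^{\rho}}{\rho}\right)^{\gamma}\varphi(t)\right|\le M$ for all $t\in[a,b]$, equivalently $|\varphi(t)|\le M\left(\frac{t^{\rho}-a^{\rho}}{\rho}\right)^{-\gamma}$ on $(a,b]$. Feeding this into the definition of ${}^{\rho}\mathcal{J}_{a^+}^{\alpha}$, see Eq.~(\ref{int-gen-esq}), and using monotonicity of the integral yields $\left|({}^{\rho}\mathcal{J}_{a^+}^{\alpha}\varphi)(x)\right|\le M\left[{}^{\rho}\mathcal{J}_{a^+}^{\alpha}\left(\frac{t^{\rho}-a^{\rho}}{\rho}\right)^{-\gamma}\right](x)$. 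I would then apply Lemma~\ref{L2} with $\xi=1-\gamma$ — admissible because $0\le\gamma<1$ forces $\xi>0$ — to evaluate the right-hand side explicitly as $M\,\frac{\Gamma(1-\gamma)}{\Gamma(\alpha-\gamma+1)}\left(\frac{x^{\rho}-a^{\rho}}{\rho}\right)^{\alpha-\gamma}$.

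Finally, because $\alpha>\gamma$ the exponent $\alpha-\gamma$ is strictly positive, so $\left(\frac{x^{\rho}-a^{\rho}}{\rho}\right)^{\alpha-\gamma}\to 0$ as $x\to a^{+}$; hence $\lim_{x\to a^{+}}({}^{\rho}\mathcal{J}_{a^+}^{\alpha}\varphi)(x)=0$, and together with continuity at $a$ this forces $({}^{\rho}\mathcal{J}_{a^+}^{\alpha}\varphi)(a)=0$. I do not expect any genuine obstacle here: the argument is a one-line majorization followed by an application of Lemma~\ref{L2}, and the only real care needed is in checking that the hypotheses of the cited results hold, namely $\gamma\le\alpha$ for the boundedness lemma and $1-\gamma>0$ for Lemma~\ref{L2}.
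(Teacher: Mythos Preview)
Your proposal is correct and follows essentially the same approach as the paper: both use the bound $|\varphi(t)|\le M\left(\frac{t^{\rho}-a^{\rho}}{\rho}\right)^{-\gamma}$, apply Lemma~\ref{L2} with $\xi=1-\gamma$ to evaluate the majorant, and conclude from $\alpha>\gamma$ that the resulting bound vanishes as $x\to a^{+}$. Your write-up is in fact slightly more thorough, since you explicitly justify the continuity on $[a,b]$ via the preceding boundedness lemma, whereas the paper's proof addresses only the limit and leaves the continuity assertion implicit.
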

\begin{lemma}
Let $\alpha>0$, $0\leq{\gamma}<1$ and $\varphi\in{C}_{\gamma}[a,b]$. Then,
\begin{eqnarray*}
({^{\rho}\mathcal{D}_{a^+}^{\alpha}}\,{^{\rho}\mathcal{J}_{a^+}^{\alpha}}\varphi)(x)=\varphi(x),
\end{eqnarray*}
for all $x\in(a,b]$.
\begin{proof}
See \textnormal{\cite{katugampola}}.
\end{proof}
\end{lemma}
\begin{lemma}\label{L4}
Let $0<\alpha<1$, $0\leq{\gamma}<1$. If $\varphi\in{C}_{\gamma}[a,b]$ and 
${^{\rho}\mathcal{J}_{a^+}^{1-\alpha}}\varphi\in{C}_{\gamma}^{1}[a,b]$, then
\begin{eqnarray*}
({^{\rho}\mathcal{J}_{a^+}^{\alpha}}\,{^{\rho}\mathcal{D}_{a^+}^{\alpha}}\varphi)(x)=
\varphi(x)-\frac{({^{\rho}\mathcal{J}_{a^+}^{1-\alpha}}\varphi)(a)}{\Gamma(\alpha)}
\left(\frac{x^{\rho}-a^{\rho}}{\rho}\right)^{\alpha-1},
\end{eqnarray*}
for all $x\in(a,b]$.
\begin{proof}
The proof uses integration by parts, with the choice 
$u=(x^{\rho}-t^{\rho})^{\alpha-1}$ and $\displaystyle dv=\frac{d}{dt}
({^{\rho}\mathcal{J}_{a^+}^{1-\alpha}}\varphi)(t)dt$.
\end{proof}
\end{lemma}

\begin{lemma}
Let $\alpha>0$, $0\leq{\gamma}<1$ and $\varphi\in{C}_{\gamma}[a,b]$. Then,
\begin{eqnarray*}
({^{\rho}\mathcal{D}_{a^+}^{\alpha}}\,{^{\rho}\mathcal{J}_{a^+}^{\alpha}}\varphi)(x)=\varphi(x),
\end{eqnarray*}
for all $x\in(a,b]$.
\begin{proof}
See \textnormal{\cite{katugampola}}.
\end{proof}
\end{lemma}
\begin{lemma}\label{L4}
Let $0<\alpha<1$, $0\leq{\gamma}<1$. If $\varphi\in{C}_{\gamma}[a,b]$ and 
${^{\rho}\mathcal{J}_{a^+}^{1-\alpha}}\varphi\in{C}_{\gamma}^{1}[a,b]$, then
\begin{eqnarray*}
({^{\rho}\mathcal{J}_{a^+}^{\alpha}}\,{^{\rho}\mathcal{D}_{a^+}^{\alpha}}\varphi)(x)=
\varphi(x)-\frac{({^{\rho}\mathcal{J}_{a^+}^{1-\alpha}}\varphi)(a)}{\Gamma(\alpha)}
\left(\frac{x^{\rho}-a^{\rho}}{\rho}\right)^{\alpha-1},
\end{eqnarray*}
for all $x\in(a,b]$.
\begin{proof}
The proof uses integration by parts, with the choice 
$u=(x^{\rho}-t^{\rho})^{\alpha-1}$ and $\displaystyle dv=\frac{d}{dt}
({^{\rho}\mathcal{J}_{a^+}^{1-\alpha}}\varphi)(t)dt$.
\end{proof}
\end{lemma}


\section{Hilfer-Katugampola fractional derivative}\label{sec:3}

In this section, our main result, we introduce the Hilfer-Katugampola
fractional derivative and discuss other formulations for fractional
derivatives.

\begin{definition}\label{Hilfer-Katugampola}
Let order $\alpha$ and type $\beta$ satisfy $n-1<\alpha\leq{n}$ and $0\leq{\beta}\leq{1}$,
with $n\in\mathbb{N}$. 
The fractional derivative (left-sided/right-sided), with respect to $x$, with 
$\rho>0$ of a function $\varphi\in{C_{{1-\gamma},\rho}}[a,b]$, is defined by
\begin{eqnarray*}
({^{\rho}\mathcal{D}^{\alpha,\beta}_{a\pm}}\varphi)(x)&=&\left(\pm\,{^{\rho}\mathcal{J}_{a\pm}^{\beta(n-\alpha)}}\left(t^{\rho-1}\frac{d}{dt}\right)^{n}{^{\rho}\mathcal{J}_{a\pm}^{(1-\beta)(n-\alpha)}}\varphi\right)(x)\\
&=&\left(\pm\,{^{\rho}\mathcal{J}_{a\pm}^{\beta(n-\alpha)}}\delta_{\rho}^{n}\,{^{\rho}\mathcal{J}_{a\pm}^{(1-\beta)(n-\alpha)}}\varphi\right)(x),
\end{eqnarray*}
where $\mathcal{J}$ is the generalized fractional integral 
given in \textnormal{Definition \ref{generalized-integral}}.
In this paper we consider the case $n=1$ only, because the Hilfer derivative and
the Hilfer-Hadamard derivative are discussed with $0<\alpha<1$.
\end{definition}
Note that we present and discuss our new results involving the Hilfer-Katugampola
fractional derivative using only the left-sided operator. 
An analogous procedure can be developed using the right-sided operator. The following
property shows that it is possible to write operator
${^{\rho}\mathcal{D}^{\alpha,\beta}_{a^+}}$
in terms of the operator given in Definition \ref{der-gen}.
\begin{property} \label{P1}
The operator ${^{\rho}\mathcal{D}^{\alpha,\beta}_{a^+}}$ can be written as
\begin{eqnarray*}
{^{\rho}\mathcal{D}^{\alpha,\beta}_{a^+}}={^{\rho}\mathcal{J}_{a^+}^{\beta(1-\alpha)}}
\delta_{\rho}\,{^{\rho}\mathcal{J}_{a^+}^{1-\gamma}}={^{\rho}\mathcal{J}_{a^+}^{\beta(1-\alpha)}}
\,{^{\rho}\mathcal{D}_{a^+}^{\gamma}}, \quad \gamma=\alpha+\beta(1-\alpha).
\end{eqnarray*}
\begin{proof}
From definition of the generalized fractional integral, we have
\begin{footnotesize}
\begin{eqnarray*}
({^{\rho}\mathcal{D}^{\alpha,\beta}_{a^+}}\varphi)(x)&=&{^{\rho}\mathcal{J}_{a^+}^{\beta(1-\alpha)}}
\left(x^{1-\rho}\frac{d}{dx}\right)\\
&\times&\left\{\frac{\rho^{1-(1-\beta})(1-\alpha)}{\Gamma((1-\beta})
(1-\alpha))\int_{a}^{x}\frac{t^{\rho-1}}{(x^{\rho}-t^{\rho})^{1-(1-\beta})(1-\alpha)}\varphi(t)dt\right\}\\
&=&\left[{^{\rho}\mathcal{J}_{a^+}^{\beta(1-\alpha)}}\frac{\rho^{1+\alpha+\beta}-\alpha\beta}
{\Gamma((1-\beta})(1-\alpha)-1)\right.\\
&\times& \left.\int_{a}^{x}\frac{t^{\rho-1}}{(x^{\rho}-t^{\rho})^{1+\alpha+\beta}-\alpha\beta}\varphi(t)dt
\right](x)\\
&=&({^{\rho}\mathcal{J}_{a^+}^{\beta(1-\alpha)}}\,{^{\rho}\mathcal{D}_{a^+}^{\gamma}}\varphi)(x),
\end{eqnarray*}
\end{footnotesize}
where operator $\mathcal{D}$ is the generalized fractional derivative 
	given in \textnormal{Definition \ref{der-gen}}.
\end{proof}
\end{property}
\begin{property} \label{Hilfer-Katugampola-parameters}

The fractional derivative ${^{\rho}\mathcal{D}^{\alpha,\beta}_{a^+}}$ is an
	interpolator of the following fractional derivatives: Hilfer
	$(\rho\rightarrow{1})$ \textnormal{\cite{Hilfer}}, Hilfer-Hadamard
	$(\rho\rightarrow{0^+})$ \textnormal{\cite{Kassim}}, generalized
	$(\beta=0)$ \textnormal{\cite{katugampola1}}, Caputo-type $(\beta=1)$
	\textnormal{\cite{danielaeco}}, Riemann-Liouville $(\beta=0,
	\rho\rightarrow{1})$ \textnormal{\cite{kilbas}}, Hadamard $(\beta=0,
	\rho\rightarrow{0^+})$ \textnormal{\cite{kilbas}}, Caputo $(\beta=1,
	\rho\rightarrow{1})$ \textnormal{\cite{kilbas}}, Caputo-Hadamard
	$(\beta=1, \rho\rightarrow{0^+})$ \textnormal{\cite{gambo}}, Liouville
	$(\beta=0, \rho\rightarrow{1}, a=0)$ \textnormal{\cite{kilbas}} and
	Weyl $(\beta=0, \rho\rightarrow{1}, a=-\infty)$
	\textnormal{\cite{R.Hilfer}}.  This fact is illustrated in the diagram
	below.

\end{property}
\begin{landscape}

\centering

\tikzstyle{block} = [rectangle, draw=blue, thick, fill=blue!20, align=center,
text width=12em, node distance=1cm, text centered, rounded corners,  minimum width=3cm, minimum height=2em]

\tikzstyle{block2} = 
    [rectangle, draw=blue, thick, fill=blue!20, align=center,
text width=8em, node distance=2cm, text centered, rounded corners,  minimum width=3cm, minimum height=2em]

\tikzstyle{block3} = 
    [rectangle, draw=blue, thick, fill=blue!20, align=center,
text width=8em, node distance=2cm, text centered, rounded corners,  minimum width=3cm, minimum height=2em]


\tikzstyle{line} = 
    [draw, -latex']

\begin{tikzpicture}[node distance = 1cm, auto]

    \node [block] (Hilfer) {Hilfer-Katugampola derivative};
    \node [block, below left of= Hilfer, node distance=4.25cm,  xshift=-1.2cm] (Katugampola) {Katugampola derivative};
    \node [block, below right of= Hilfer, node distance=4.25cm,  xshift=1.2cm] (Caputo-type) {Caputo-type derivative};

    \node [block2, below left of=Katugampola, node distance=4cm,  xshift=0.6cm] (RL) {Riemann-Liouville};
    \node [block2, below right of=Katugampola, node distance=4cm,  xshift=-0.6cm] (Hadamard) {Hadamard};
    \node [block2, below left of=Caputo-type, node distance=4cm,  xshift=0.6cm] (Caputo) {Caputo};
    \node [block2, below right of=Caputo-type, node distance=4.2cm,  xshift=-0.6cm] (Caputo-Hadmard) {Caputo-Hadamard};

    \node [block3, below left of=RL, node distance=4cm,  xshift=0.6cm] (Liouville) {Liouville };
    \node [block3, below right of=RL, node distance=4cm,  xshift=-0.6cm] (Weyl) {Weyl};
    
    \node [block2, right of=Hilfer, node distance=5cm,  xshift=1.5cm] (H-H) {Hilfer-Hadamard derivative} ;
		
    \node [block2, left of=Hilfer, node distance=5cm,  xshift=-1.5cm] (H) {Hilfer derivative} ;

    \path[line] let \p1=(Hilfer.south), \p2=(Katugampola.north) in (Hilfer.south) --  +(0,0.5*\y2-0.5*\y1) -| node [pos=0.3, above] {$\beta=0$} (Katugampola.north);
    \path[line] let \p1=(Hilfer.south), \p2=(Caputo-type.north) in (Hilfer.south) -- +(0,0.5*\y2-0.5*\y1) -| node [pos=0.3, above] {$\beta=1$} (Caputo-type.north);

    \path[line] let \p1=(Katugampola.south), \p2=(RL.north) in (Katugampola.south) -- +(0,0.5*\y2-0.5*\y1) -| node [pos=0.3, above] {$\rho\rightarrow{1}$} (RL.north);
    \path[line] let \p1=(Katugampola.south), \p2=(Hadamard.north) in (Katugampola.south) -- +(0,0.5*\y2-0.5*\y1) -| node [pos=0.3, above] {$\rho\rightarrow{0^+}$} (Hadamard.north);

    \path[line] let \p1=(Caputo-type.south), \p2=(Caputo.north) in (Caputo-type.south) -- +(0,0.5*\y2-0.5*\y1) -| node [pos=0.3, above] {$\rho\rightarrow{1}$} (Caputo.north);
    \path[line] let \p1=(Caputo-type.south), \p2=(Caputo-Hadmard.north) in (Caputo-type.south) -- +(0,0.5*\y2-0.5*\y1) -| node [pos=0.3, above] {$\rho\rightarrow{0^+}$} (Caputo-Hadmard.north);
    
     \path[line] let \p1=(RL.south), \p2=(Liouville.north) in (RL.south) -- +(0,0.5*\y2-0.5*\y1) -| node [pos=0.3, above] {$a=0$} (Liouville.north);
    \path[line] let \p1=(RL.south), \p2=(Weyl.north) in (RL.south) -- +(0,0.5*\y2-0.5*\y1) -| node [pos=0.3, above] {$a=-\infty$} (Weyl.north);
		
	 \path [line] (Hilfer) -- node {$\rho\rightarrow{0^+}$}(H-H) ;	

	 \path [line] (Hilfer) -- node[above] {$\rho\rightarrow{1}$}(H) ;

\end{tikzpicture}


\end{landscape}
\begin{property}\label{espacos2}
We consider the following parameters $\alpha,\beta,\gamma,\mu$ satisfying
\begin{eqnarray*}
\gamma =\alpha+\beta(1-\alpha), \quad\quad 0<\alpha, \beta, \gamma <1, \quad\quad 0 \leq \mu <1.
\end{eqnarray*}
Thus, we define the spaces 
\begin{eqnarray*}
{C}_{1-\gamma, \mu}^{\alpha,\beta}[a,b]=\{\varphi\in{C}_{1-\gamma,\rho}[a,b], 
{^{\rho}\mathcal{D}^{\alpha,\beta}_{a^+}}\varphi\in{C}_{\mu,\rho}[a,b]\},
\end{eqnarray*}
and
\begin{eqnarray*}
{C}_{1-\gamma,\rho}^{\gamma}[a,b]=\{\varphi\in{C}_{1-\gamma,\rho}[a,b], {^{\rho}
\mathcal{D}^{\gamma}_{a^+}}\varphi\in{C}_{1-\gamma,\rho}[a,b]\},
\end{eqnarray*}
where ${C}_{\mu,\rho}[a,b]$ and ${C}_{1-\gamma,\rho}[a,b]$ are weighted spaces of 
	continuous functions on $(a,b]$ defined by \rm{item (2)}
	in \textnormal{Definition \ref{espacos}}.
Since ${^{\rho}\mathcal{D}^{\alpha,\beta}_{a^+}}\varphi=
	{{^{\rho}\mathcal{J}^{\gamma(1-\alpha)}_{a^+}}\,
	{^{\rho}\mathcal{D}^{\gamma}_{a^+}}\varphi}$, it follows from \textnormal{Lemma \ref{Lem-3}} that
\begin{eqnarray*}
{C}_{1-\gamma}^{\gamma}[a,b]\subset{C}_{1-\gamma}^{\alpha,\beta}[a,b].
\end{eqnarray*}
\end{property}
\begin{lemma} \label{L5}
Let $0<\alpha<1$, $0\leq\beta\leq{1}$ and $\gamma=\alpha+\beta(1-\alpha)$. 
If $\varphi\in{C}_{1-\gamma}^{\gamma}[a,b]$, then
\begin{eqnarray}
{^{\rho}\mathcal{J}_{a^+}^{\gamma}}{^{\rho}\mathcal{D}^{\gamma}_{a^+}}\varphi=
{^{\rho}\mathcal{J}_{a^+}^{\alpha}}{^{\rho}\mathcal{D}^{\alpha,\beta}_{a^+}}\varphi 
\label{integral-derivada}
\end{eqnarray}
and
\begin{eqnarray}
{^{\rho}\mathcal{D}^{\gamma}_{a^+}}{^{\rho}\mathcal{J}_{a^+}^{\alpha}}\varphi=
{^{\rho}\mathcal{D}^{\beta(1-\alpha)}_{a^+}}\varphi. \label{derivada-integral}
\end{eqnarray}
\begin{proof}
We first prove \textnormal{Eq.(\ref{integral-derivada})}. 
Using \textnormal{Theorem \ref{semigrupo}} and \textnormal{Property \ref{P1}}, 
we can write
\begin{eqnarray*}
{^{\rho}\mathcal{J}_{a^+}^{\gamma}}{^{\rho}\mathcal{D}^{\gamma}_{a^+}}\varphi&=&
{^{\rho}\mathcal{J}_{a^+}^{\gamma}}{^{\rho}\mathcal{J}_{a^+}^{-\beta(1-\alpha)}}
{^{\rho}\mathcal{D}^{\alpha,\beta}_{a^+}}\varphi\\
&=&{^{\rho}\mathcal{J}_{a^+}^{\alpha+\beta-\alpha\beta}}{^{\rho}\mathcal{J}_{a^+}^{-\beta+\alpha\beta}}
{^{\rho}\mathcal{D}^{\alpha,\beta}_{a^+}}\varphi
={^{\rho}\mathcal{J}_{a^+}^{\alpha}}{^{\rho}\mathcal{D}^{\alpha,\beta}_{a^+}}\varphi.
\end{eqnarray*}
To prove \textnormal{Eq.(\ref{derivada-integral})}, we use 
\textnormal{Definition \ref{Hilfer-Katugampola}} and
\textnormal{Theorem \ref{semigrupo}} to get
\begin{eqnarray*}
{^{\rho}\mathcal{D}^{\gamma}_{a^+}}{^{\rho}\mathcal{J}_{a^+}^{\alpha}}\varphi&=&
\delta_{\rho}\,{^{\rho}\mathcal{J}_{a^+}^{1-\gamma}}\,{^{\rho}\mathcal{J}_{a^+}^{\alpha}}\varphi
=\delta_{\rho}\,{^{\rho}\mathcal{J}_{a^+}^{1-\beta+\alpha\beta}}\varphi\\
&=&\delta_{\rho}\,{^{\rho}\mathcal{J}_{a^+}^{1-\beta(1-\alpha)}}\varphi
={^{\rho}\mathcal{D}^{\beta(1-\alpha)}_{a^+}}\varphi.
\end{eqnarray*}
\end{proof}
\end{lemma}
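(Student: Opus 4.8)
The plan is to prove both identities by purely algebraic manipulation at the level of operators, using Property \ref{P1} to rewrite the Hilfer--Katugampola derivative as a composition of a generalized fractional integral with a generalized (Riemann--Liouville-type) fractional derivative, and then collapsing the resulting compositions of ${}^{\rho}\mathcal{J}$-operators via the semigroup property of Theorem \ref{semigrupo}. Throughout I use that $n=1$ here (since $0<\alpha<1$, $0<\gamma<1$ and $\beta(1-\alpha)\in[0,1)$), so that $\delta_\rho\,{^{\rho}\mathcal{J}_{a^+}^{1-\gamma}}={^{\rho}\mathcal{D}^{\gamma}_{a^+}}$, and the identity $\gamma=\alpha+\beta(1-\alpha)$, equivalently $(1-\beta)(1-\alpha)=1-\gamma$ and $1-\gamma+\alpha=1-\beta(1-\alpha)$.

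For Eq.(\ref{integral-derivada}) I would start from the right-hand side and apply Property \ref{P1} in the form ${^{\rho}\mathcal{D}^{\alpha,\beta}_{a^+}}\varphi={^{\rho}\mathcal{J}_{a^+}^{\beta(1-\alpha)}}\,{^{\rho}\mathcal{D}^{\gamma}_{a^+}}\varphi$, obtaining ${^{\rho}\mathcal{J}_{a^+}^{\alpha}}\,{^{\rho}\mathcal{D}^{\alpha,\beta}_{a^+}}\varphi={^{\rho}\mathcal{J}_{a^+}^{\alpha}}\,{^{\rho}\mathcal{J}_{a^+}^{\beta(1-\alpha)}}\,{^{\rho}\mathcal{D}^{\gamma}_{a^+}}\varphi$; then Theorem \ref{semigrupo} fuses the two integrals into ${^{\rho}\mathcal{J}_{a^+}^{\alpha+\beta(1-\alpha)}}\,{^{\rho}\mathcal{D}^{\gamma}_{a^+}}\varphi={^{\rho}\mathcal{J}_{a^+}^{\gamma}}\,{^{\rho}\mathcal{D}^{\gamma}_{a^+}}\varphi$, which is the left-hand side. (One can alternatively phrase this exactly as in the excerpt, writing ${^{\rho}\mathcal{D}^{\gamma}_{a^+}}={^{\rho}\mathcal{J}_{a^+}^{-\beta(1-\alpha)}}\,{^{\rho}\mathcal{D}^{\alpha,\beta}_{a^+}}$; routing through Property \ref{P1} instead avoids having to assign a meaning to negative-order integrals.) For Eq.(\ref{derivada-integral}) I would use Definition \ref{Hilfer-Katugampola} to write ${^{\rho}\mathcal{D}^{\gamma}_{a^+}}=\delta_\rho\,{^{\rho}\mathcal{J}_{a^+}^{1-\gamma}}$, so that ${^{\rho}\mathcal{D}^{\gamma}_{a^+}}\,{^{\rho}\mathcal{J}_{a^+}^{\alpha}}\varphi=\delta_\rho\,{^{\rho}\mathcal{J}_{a^+}^{1-\gamma}}\,{^{\rho}\mathcal{J}_{a^+}^{\alpha}}\varphi=\delta_\rho\,{^{\rho}\mathcal{J}_{a^+}^{1-\gamma+\alpha}}\varphi$ by Theorem \ref{semigrupo}; since $1-\gamma+\alpha=1-\beta(1-\alpha)$ and $\beta(1-\alpha)\in[0,1)$, the right-hand side is $\delta_\rho\,{^{\rho}\mathcal{J}_{a^+}^{1-\beta(1-\alpha)}}\varphi={^{\rho}\mathcal{D}^{\beta(1-\alpha)}_{a^+}}\varphi$.

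The main obstacle is not the algebra but making sure the hypotheses under which Theorem \ref{semigrupo} and Property \ref{P1} were stated are actually met at each step, i.e.\ that $\varphi$ and the intermediate functions ${^{\rho}\mathcal{D}^{\gamma}_{a^+}}\varphi$ and ${^{\rho}\mathcal{J}_{a^+}^{\alpha}}\varphi$ belong to the relevant classes (an $X_c^p$-type space for the semigroup step, and the weighted spaces of Definition \ref{espacos} for the composition identities). Here the assumption $\varphi\in{C}_{1-\gamma}^{\gamma}[a,b]$ is exactly what is needed: it gives $\varphi\in C_{1-\gamma,\rho}[a,b]$ and ${^{\rho}\mathcal{D}^{\gamma}_{a^+}}\varphi\in C_{1-\gamma,\rho}[a,b]$, Lemma \ref{Lem-3} keeps the images of ${^{\rho}\mathcal{J}_{a^+}^{\alpha}}$ and ${^{\rho}\mathcal{J}_{a^+}^{\beta(1-\alpha)}}$ inside these weighted spaces, and the $\delta_\rho$-differentiability required for the last equality in Eq.(\ref{derivada-integral}) is built into the definition of ${C}_{1-\gamma}^{\gamma}[a,b]$. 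Once these membership checks are recorded, both displays follow immediately by concatenating Property \ref{P1}, Theorem \ref{semigrupo} and Definition \ref{Hilfer-Katugampola}.
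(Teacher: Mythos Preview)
Your proof is correct and follows essentially the same approach as the paper: use Property~\ref{P1} to express ${^{\rho}\mathcal{D}^{\alpha,\beta}_{a^+}}$ as ${^{\rho}\mathcal{J}_{a^+}^{\beta(1-\alpha)}}\,{^{\rho}\mathcal{D}^{\gamma}_{a^+}}$ and then collapse the fractional integrals with Theorem~\ref{semigrupo}, and for the second identity unfold ${^{\rho}\mathcal{D}^{\gamma}_{a^+}}=\delta_\rho\,{^{\rho}\mathcal{J}_{a^+}^{1-\gamma}}$ and apply the semigroup property again. The only cosmetic difference is that for Eq.~(\ref{integral-derivada}) you work from the right-hand side to the left, which neatly sidesteps the paper's use of the formal negative-order integral ${^{\rho}\mathcal{J}_{a^+}^{-\beta(1-\alpha)}}$, and you add a paragraph verifying that the membership hypotheses are satisfied.
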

\begin{lemma}\label{L6}
Let $\varphi\in{L}^{1}(a,b)$. If ${^{\rho}\mathcal{D}^{\beta(1-\alpha)}_{a^+}}\varphi$ 
exists on ${L}^{1}(a,b)$, then
\begin{eqnarray*}
{^{\rho}\mathcal{D}^{\alpha,\beta}_{a^+}}{^{\rho}\mathcal{J}_{a^+}^{\alpha}}\varphi={^{\rho}\mathcal{J}_{a^+}^{\beta(1-\alpha)}}{^{\rho}\mathcal{D}^{\beta(1-\alpha)}_{a^+}}\varphi.
\end{eqnarray*}
\begin{proof}
From \textnormal{Lemma \ref{L2}}, \textnormal{Definition \ref{der-gen}} and 
\textnormal{Definition \ref{Hilfer-Katugampola}}, we obtain 
\begin{eqnarray*}
{^{\rho}\mathcal{D}^{\alpha,\beta}_{a^+}}{^{\rho}\mathcal{J}_{a^+}^{\alpha}}\varphi
&=&{^{\rho}\mathcal{J}_{a^+}^{\beta(1-\alpha)}}\,{^{\rho}\mathcal{D}^{\gamma}_{a^+}}
{^{\rho}\mathcal{J}_{a^+}^{\alpha}}\varphi
={^{\rho}\mathcal{J}_{a^+}^{\beta(1-\alpha)}}\,\delta_{\rho}\,{^{\rho}\mathcal{J}_{a^+}^{1-\gamma}}\,
{^{\rho}\mathcal{J}_{a^+}^{\alpha}}\varphi\\
&=&{^{\rho}\mathcal{J}_{a^+}^{\beta(1-\alpha)}}\,\delta_{\rho}\,
{^{\rho}\mathcal{J}_{a^+}^{1-\beta(1-\alpha)}}\varphi
={^{\rho}\mathcal{J}_{a^+}^{\beta(1-\alpha)}}{^{\rho}\mathcal{D}^{\beta(1-\alpha)}_{a^+}}\varphi.
\end{eqnarray*}
\end{proof}
\end{lemma}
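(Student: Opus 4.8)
The plan is to reduce the identity to the factorisation of ${^{\rho}\mathcal{D}^{\alpha,\beta}_{a^+}}$ provided by Property \ref{P1}, together with the semigroup law of Theorem \ref{semigrupo}. Throughout set $\gamma=\alpha+\beta(1-\alpha)$, so that $1-\gamma=(1-\beta)(1-\alpha)$ and, what really drives the computation, $1-\gamma+\alpha=1-\beta(1-\alpha)$.

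First I would apply Property \ref{P1} and then Definition \ref{der-gen} (with $n=1$, since $0<\gamma<1$) to unfold the left-hand side:
\begin{eqnarray*}
{^{\rho}\mathcal{D}^{\alpha,\beta}_{a^+}}\,{^{\rho}\mathcal{J}_{a^+}^{\alpha}}\varphi
&=&{^{\rho}\mathcal{J}_{a^+}^{\beta(1-\alpha)}}\,{^{\rho}\mathcal{D}^{\gamma}_{a^+}}\,{^{\rho}\mathcal{J}_{a^+}^{\alpha}}\varphi\\
&=&{^{\rho}\mathcal{J}_{a^+}^{\beta(1-\alpha)}}\,\delta_{\rho}\,{^{\rho}\mathcal{J}_{a^+}^{1-\gamma}}\,{^{\rho}\mathcal{J}_{a^+}^{\alpha}}\varphi.
\end{eqnarray*}
Next I would collapse the two innermost generalized integrals using Theorem \ref{semigrupo}, namely ${^{\rho}\mathcal{J}_{a^+}^{1-\gamma}}\,{^{\rho}\mathcal{J}_{a^+}^{\alpha}}\varphi={^{\rho}\mathcal{J}_{a^+}^{1-\gamma+\alpha}}\varphi={^{\rho}\mathcal{J}_{a^+}^{1-\beta(1-\alpha)}}\varphi$, and finally recognise $\delta_{\rho}\,{^{\rho}\mathcal{J}_{a^+}^{1-\beta(1-\alpha)}}\varphi$ as ${^{\rho}\mathcal{D}^{\beta(1-\alpha)}_{a^+}}\varphi$, once more by Definition \ref{der-gen} with $n=1$ applied to the order $\beta(1-\alpha)\in(0,1)$. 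This last object is exactly the one the hypothesis postulates to exist in $L^{1}(a,b)$, which is what makes the step meaningful — the outer ${^{\rho}\mathcal{J}_{a^+}^{\beta(1-\alpha)}}$ is then applied to an honest $L^{1}$ function. Substituting back yields
\begin{eqnarray*}
{^{\rho}\mathcal{D}^{\alpha,\beta}_{a^+}}\,{^{\rho}\mathcal{J}_{a^+}^{\alpha}}\varphi
={^{\rho}\mathcal{J}_{a^+}^{\beta(1-\alpha)}}\,{^{\rho}\mathcal{D}^{\beta(1-\alpha)}_{a^+}}\varphi,
\end{eqnarray*}
which is the claim. Equivalently, once Property \ref{P1} is in hand one observes that ${^{\rho}\mathcal{D}^{\gamma}_{a^+}}\,{^{\rho}\mathcal{J}_{a^+}^{\alpha}}\varphi={^{\rho}\mathcal{D}^{\beta(1-\alpha)}_{a^+}}\varphi$ is precisely Eq.(\ref{derivada-integral}) of Lemma \ref{L5}, so one could simply quote it.

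The only point requiring genuine care is that the semigroup step of Theorem \ref{semigrupo} must be applied in a class where it is valid. This is harmless: since $0<a<b<\infty$, the weight $t^{\,c-1}$ is bounded above and below on $[a,b]$, whence $L^{1}(a,b)\subset X_{c}^{1}(a,b)$ for every $c\le\rho$, and Theorem \ref{semigrupo} applies with $p=1$. The borderline case $\beta=0$ is covered by the same computation: then $\beta(1-\alpha)=0$, both ${^{\rho}\mathcal{J}_{a^+}^{0}}$ and ${^{\rho}\mathcal{D}^{0}_{a^+}}$ reduce to the identity, and the statement degenerates to ${^{\rho}\mathcal{D}^{\alpha,0}_{a^+}}\,{^{\rho}\mathcal{J}_{a^+}^{\alpha}}\varphi=\varphi$.
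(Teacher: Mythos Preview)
Your proposal is correct and follows essentially the same chain of equalities as the paper's proof: factor ${^{\rho}\mathcal{D}^{\alpha,\beta}_{a^+}}$ via Property~\ref{P1}, expand ${^{\rho}\mathcal{D}^{\gamma}_{a^+}}$ as $\delta_\rho\,{^{\rho}\mathcal{J}_{a^+}^{1-\gamma}}$, collapse ${^{\rho}\mathcal{J}_{a^+}^{1-\gamma}}\,{^{\rho}\mathcal{J}_{a^+}^{\alpha}}$ into ${^{\rho}\mathcal{J}_{a^+}^{1-\beta(1-\alpha)}}$, and recognise the result. If anything, your citations are sharper --- you invoke Theorem~\ref{semigrupo} for the semigroup step and take care to check that $\varphi\in L^{1}(a,b)\subset X_{c}^{1}(a,b)$, whereas the paper's reference to Lemma~\ref{L2} at that point is not the pertinent one.
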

\begin{lemma}
Let $0<\alpha<1$, $0\leq{\beta}\leq{1}$ and $\gamma=\alpha+\beta(1-\alpha)$. 
If $\varphi\in{C}_{1-\gamma}[a,b]$ and ${^{\rho}\mathcal{J}_{a^+}^{1-\beta(1-\alpha)}}
\in{C}^{1}_{1-\gamma}[a,b]$, then ${^{\rho}\mathcal{D}^{\alpha,\beta}_{a^+}}\,
{^{\rho}\mathcal{J}_{a^+}^{\alpha}}\varphi$ exists on $(a,b]$ and
\begin{eqnarray}
{^{\rho}\mathcal{D}^{\alpha,\beta}_{a^+}}\,{^{\rho}\mathcal{J}_{a^+}^{\alpha}}\varphi=\varphi, 
\quad x\in(a,b].
\end{eqnarray}
\begin{proof}
Using \textnormal{Lemma \ref{L4}}, \textnormal{Lemma \ref{L2}} and \textnormal{Lemma \ref{L6}}, 
we obtain
\begin{eqnarray*}
({^{\rho}\mathcal{D}^{\alpha,\beta}_{a^+}}\,{^{\rho}\mathcal{J}_{a^+}^{\alpha}}\varphi)(x)
&=&({^{\rho}\mathcal{J}_{a^+}^{\beta(1-\alpha)}}\,{^{\rho}\mathcal{D}^{\beta(1-\alpha)}_{a^+}}\varphi)(x)\\
&=&\varphi(x)-\frac{({^{\rho}\mathcal{J}_{a^+}^{\beta(1-\alpha)}}\varphi)(a)}{\Gamma(\alpha)}
\left(\frac{x^{\rho}-a^{\rho}}{\rho}\right)^{\beta(1-\alpha)-1}\\
&=&\varphi(x), \quad x\in(a,b].
\end{eqnarray*}
\end{proof}
\end{lemma}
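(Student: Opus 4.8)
Here is the plan. The idea is to peel the composition apart from the inside out, reducing everything to the single-operator cancellation already available as Lemma~\ref{L4}. I would first dispense with the degenerate value $\beta=0$: there $\gamma=\alpha$ and $\beta(1-\alpha)=0$, so ${^{\rho}\mathcal{D}^{\alpha,0}_{a^+}}={^{\rho}\mathcal{D}^{\alpha}_{a^+}}$ and the assertion is exactly the identity ${^{\rho}\mathcal{D}^{\alpha}_{a^+}}\,{^{\rho}\mathcal{J}_{a^+}^{\alpha}}\varphi=\varphi$ on $(a,b]$ recorded above. So assume $0<\beta\le 1$, which makes $\beta(1-\alpha)\in(0,1)$.

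Next I would invoke Lemma~\ref{L6}. Its first hypothesis, $\varphi\in L^1(a,b)$, is immediate from $\varphi\in C_{1-\gamma,\rho}[a,b]$ since the weight exponent $1-\gamma<1$ is integrable near $a$. Its second hypothesis, that ${^{\rho}\mathcal{D}^{\beta(1-\alpha)}_{a^+}}\varphi$ exists in $L^1(a,b)$, follows from the standing assumption ${^{\rho}\mathcal{J}_{a^+}^{1-\beta(1-\alpha)}}\varphi\in C^1_{1-\gamma}[a,b]$: by Definition~\ref{der-gen} with order $\beta(1-\alpha)$ and $n=1$ one has ${^{\rho}\mathcal{D}^{\beta(1-\alpha)}_{a^+}}\varphi=\delta_\rho\,{^{\rho}\mathcal{J}_{a^+}^{1-\beta(1-\alpha)}}\varphi$, and by definition of $C^1_{1-\gamma}$ this lies in $C_{1-\gamma,\rho}[a,b]\subset L^1(a,b)$. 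Lemma~\ref{L6} then delivers
\[
{^{\rho}\mathcal{D}^{\alpha,\beta}_{a^+}}\,{^{\rho}\mathcal{J}_{a^+}^{\alpha}}\varphi={^{\rho}\mathcal{J}_{a^+}^{\beta(1-\alpha)}}\,{^{\rho}\mathcal{D}^{\beta(1-\alpha)}_{a^+}}\varphi ,
\]
and in particular shows the left-hand side is defined on $(a,b]$.

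Now I would apply Lemma~\ref{L4} with order $\beta(1-\alpha)\in(0,1)$ and weight $1-\gamma$ (which is $<1$ because $\gamma\ge\alpha>0$). Its two hypotheses are precisely $\varphi\in C_{1-\gamma,\rho}[a,b]$ and ${^{\rho}\mathcal{J}_{a^+}^{1-\beta(1-\alpha)}}\varphi\in C^1_{1-\gamma}[a,b]$, both assumed, so that for $x\in(a,b]$
\[
\big({^{\rho}\mathcal{J}_{a^+}^{\beta(1-\alpha)}}\,{^{\rho}\mathcal{D}^{\beta(1-\alpha)}_{a^+}}\varphi\big)(x)=\varphi(x)-\frac{\big({^{\rho}\mathcal{J}_{a^+}^{1-\beta(1-\alpha)}}\varphi\big)(a)}{\Gamma\big(\beta(1-\alpha)\big)}\left(\frac{x^{\rho}-a^{\rho}}{\rho}\right)^{\beta(1-\alpha)-1}.
\]
To finish I must show the boundary term vanishes, i.e.\ $\big({^{\rho}\mathcal{J}_{a^+}^{1-\beta(1-\alpha)}}\varphi\big)(a)=0$. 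This I would obtain from Lemma~\ref{L3} applied to $\varphi\in C_{1-\gamma,\rho}[a,b]$ with integration order $1-\beta(1-\alpha)$: the strict inequality between order and weight exponent required there holds because $\big(1-\beta(1-\alpha)\big)-(1-\gamma)=\gamma-\beta(1-\alpha)=\alpha>0$, using $\gamma=\alpha+\beta(1-\alpha)$. Hence ${^{\rho}\mathcal{J}_{a^+}^{1-\beta(1-\alpha)}}\varphi$ is continuous on $[a,b]$ with value $0$ at $a$, and substituting this back gives ${^{\rho}\mathcal{D}^{\alpha,\beta}_{a^+}}\,{^{\rho}\mathcal{J}_{a^+}^{\alpha}}\varphi=\varphi$ on $(a,b]$.

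The only step needing any care — more bookkeeping than a genuine obstacle — is keeping track of the weighted spaces so that each intermediate object sits in the space required by the next lemma (above all, that ${^{\rho}\mathcal{D}^{\beta(1-\alpha)}_{a^+}}\varphi$ is integrable), together with the two order/weight inequalities $1-\gamma<1$ and $1-\beta(1-\alpha)>1-\gamma$ that license Lemmas~\ref{L4} and~\ref{L3}; both of these collapse, via the relation $\gamma=\alpha+\beta(1-\alpha)$, to the standing hypothesis $\alpha>0$.
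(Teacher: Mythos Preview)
Your proof is correct and follows the same route as the paper: Lemma~\ref{L6} reduces the composition to ${^{\rho}\mathcal{J}_{a^+}^{\beta(1-\alpha)}}\,{^{\rho}\mathcal{D}^{\beta(1-\alpha)}_{a^+}}\varphi$, Lemma~\ref{L4} expresses this as $\varphi$ minus a boundary term, and the boundary term vanishes (you invoke Lemma~\ref{L3}, the paper invokes Lemma~\ref{L2}, but the former is proved via the latter). Your version is a bit more careful---separating out $\beta=0$, checking each hypothesis, and writing the correct exponent $1-\beta(1-\alpha)$ and denominator $\Gamma(\beta(1-\alpha))$ in the boundary term---but the argument is the same.
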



\section{Equivalence between the generalized Cauchy problem and the Volterra integral equation}

We consider the following nonlinear fractional differential equation
\begin{eqnarray}
({^{\rho}\mathcal{D}^{\alpha,\beta}_{a^+}}\varphi)(x)=f(x,\varphi(x)), \quad x>a>0 \label{Cauchy}
\end{eqnarray}
where $0<\alpha<1$, $0\leq{\beta}\leq{1}$, $\rho>0$, with the initial condition
\begin{eqnarray}
({^{\rho}\mathcal{J}_{a^+}^{1-\gamma}}\varphi)(a)=c, \quad \textnormal{with} 
\quad \gamma=\alpha+\beta(1-\alpha), \quad c\in\mathbb{R}. \label{cond-inicial}
\end{eqnarray}
The following theorem yields the equivalence between the problem Eq.(\ref{Cauchy})-Eq.(\ref{cond-inicial}) 
and the Volterra integral equation, given by
\begin{eqnarray}
\varphi(x)&=&\frac{c}{\Gamma(\gamma)}\left(\frac{x^{\rho}-a^{\rho}}{\rho}\right)^{\gamma-1} 
+\frac{1}{\Gamma(\alpha)}\int_{a}^{x}\left(\frac{x^{\rho}-t^{\rho}}{\rho}\right)^{\alpha-1}
t^{\rho-1}f(t,\varphi(t))dt. \label{Volterra}
\end{eqnarray}
\begin{theorem}\label{equivalence}
Let $\gamma=\alpha+\beta(1-\alpha)$, where $0<\alpha<1$ and $0\leq{\beta}\leq{1}$. 
If $f:(a,b]\times\mathbb{R}\rightarrow{\mathbb{R}}$ is a function such that 
$f(\cdot,\varphi(\cdot))\in{C}_{1-\gamma}[a,b]$ for any $\varphi\in{C}_{1-\gamma}[a,b]$, 
then $\varphi$ satisfies \textnormal{Eq.(\ref{Cauchy})-Eq.(\ref{cond-inicial})} if 
and only if it satisfies \textnormal{Eq.(\ref{Volterra})}.
\end{theorem}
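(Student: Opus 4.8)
The plan is to establish the two implications separately, using the composition lemmas from Sections 2 and 3 as the main engine. For the forward direction, I would start from Eq.(\ref{Cauchy}), namely $({^{\rho}\mathcal{D}^{\alpha,\beta}_{a^+}}\varphi)(x)=f(x,\varphi(x))$, and apply the operator ${^{\rho}\mathcal{J}_{a^+}^{\alpha}}$ to both sides. On the left-hand side I would invoke Lemma \ref{L5}, specifically Eq.(\ref{integral-derivada}), to rewrite ${^{\rho}\mathcal{J}_{a^+}^{\alpha}}{^{\rho}\mathcal{D}^{\alpha,\beta}_{a^+}}\varphi = {^{\rho}\mathcal{J}_{a^+}^{\gamma}}{^{\rho}\mathcal{D}^{\gamma}_{a^+}}\varphi$, and then apply Lemma \ref{L4} (with $\alpha$ replaced by $\gamma$, since $0<\gamma<1$) to obtain
\begin{eqnarray*}
{^{\rho}\mathcal{J}_{a^+}^{\gamma}}{^{\rho}\mathcal{D}^{\gamma}_{a^+}}\varphi
= \varphi(x) - \frac{({^{\rho}\mathcal{J}_{a^+}^{1-\gamma}}\varphi)(a)}{\Gamma(\gamma)}\left(\frac{x^{\rho}-a^{\rho}}{\rho}\right)^{\gamma-1}.
\end{eqnarray*}
Substituting the initial condition Eq.(\ref{cond-inicial}), $({^{\rho}\mathcal{J}_{a^+}^{1-\gamma}}\varphi)(a)=c$, and writing out ${^{\rho}\mathcal{J}_{a^+}^{\alpha}}f(\cdot,\varphi(\cdot))$ explicitly via Definition \ref{generalized-integral} yields exactly Eq.(\ref{Volterra}).

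For the converse, I would assume $\varphi$ satisfies Eq.(\ref{Volterra}) and apply ${^{\rho}\mathcal{D}^{\gamma}_{a^+}}$ to both sides. By Lemma \ref{L2}, the first term $\frac{c}{\Gamma(\gamma)}\left(\frac{x^{\rho}-a^{\rho}}{\rho}\right)^{\gamma-1}$ is annihilated; for the integral term, which equals ${^{\rho}\mathcal{J}_{a^+}^{\alpha}}f(\cdot,\varphi(\cdot))$, I would use Eq.(\ref{derivada-integral}) of Lemma \ref{L5} to get ${^{\rho}\mathcal{D}^{\gamma}_{a^+}}{^{\rho}\mathcal{J}_{a^+}^{\alpha}}f = {^{\rho}\mathcal{D}^{\beta(1-\alpha)}_{a^+}}f$. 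Then applying ${^{\rho}\mathcal{J}_{a^+}^{\beta(1-\alpha)}}$ and using Lemma \ref{L6} (together with Property \ref{P1}, which expresses ${^{\rho}\mathcal{D}^{\alpha,\beta}_{a^+}} = {^{\rho}\mathcal{J}_{a^+}^{\beta(1-\alpha)}}{^{\rho}\mathcal{D}^{\gamma}_{a^+}}$) recovers $({^{\rho}\mathcal{D}^{\alpha,\beta}_{a^+}}\varphi)(x)=f(x,\varphi(x))$. To recover the initial condition, I would apply ${^{\rho}\mathcal{J}_{a^+}^{1-\gamma}}$ to Eq.(\ref{Volterra}), using the semigroup property (Theorem \ref{semigrupo}) to combine ${^{\rho}\mathcal{J}_{a^+}^{1-\gamma}}{^{\rho}\mathcal{J}_{a^+}^{\alpha}} = {^{\rho}\mathcal{J}_{a^+}^{1-\beta(1-\alpha)}}$ and Lemma \ref{L2} to evaluate ${^{\rho}\mathcal{J}_{a^+}^{1-\gamma}}$ applied to the power function $\left(\frac{t^{\rho}-a^{\rho}}{\rho}\right)^{\gamma-1}$, which gives a constant $c$; then evaluating at $x=a^+$ and using Lemma \ref{L3} to kill the remaining term (since $1-\beta(1-\alpha)>1-\gamma$) gives Eq.(\ref{cond-inicial}).

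The main obstacle I anticipate is justifying the hypotheses needed to invoke Lemma \ref{L4} and Lemma \ref{L6} at each step — in particular, verifying that ${^{\rho}\mathcal{J}_{a^+}^{1-\gamma}}\varphi \in C^1_{1-\gamma}[a,b]$ (for the forward direction) and that ${^{\rho}\mathcal{D}^{\beta(1-\alpha)}_{a^+}}f(\cdot,\varphi(\cdot))$ exists in the appropriate space (for the converse). These require careful bookkeeping with the weighted spaces $C_{1-\gamma,\rho}[a,b]$ and the mapping properties in Lemmas \ref{Lem-2}, \ref{Lem-3}, and \ref{L3}; the continuity-at-the-endpoint argument of Lemma \ref{L1} may be needed to patch the behavior at $x=a$. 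A secondary subtlety is the case $\beta=1$ (so $\gamma=1$), where $\left(\frac{x^{\rho}-a^{\rho}}{\rho}\right)^{\gamma-1}$ degenerates; one should check the argument still goes through or note that the relevant lemmas already cover it. Everything else is a routine substitution of the explicit integral formula from Definition \ref{generalized-integral} and algebraic simplification of the exponents $\gamma = \alpha+\beta(1-\alpha)$ and $1-\beta(1-\alpha)$.
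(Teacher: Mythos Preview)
Your proposal is correct and follows essentially the same route as the paper: for the forward direction you use Lemma~\ref{L5} to pass from ${^{\rho}\mathcal{J}_{a^+}^{\alpha}}{^{\rho}\mathcal{D}^{\alpha,\beta}_{a^+}}$ to ${^{\rho}\mathcal{J}_{a^+}^{\gamma}}{^{\rho}\mathcal{D}^{\gamma}_{a^+}}$ and then Lemma~\ref{L4} with the initial condition, exactly as the paper does; for the converse you apply ${^{\rho}\mathcal{D}^{\gamma}_{a^+}}$ (killing the power term via Lemma~\ref{L2} and using Eq.(\ref{derivada-integral})), then ${^{\rho}\mathcal{J}_{a^+}^{\beta(1-\alpha)}}$, and finally ${^{\rho}\mathcal{J}_{a^+}^{1-\gamma}}$ with the limit $x\to a^+$ to recover the initial data, again matching the paper. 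The only cosmetic difference is that in the converse the paper invokes Lemma~\ref{L4} (showing the boundary term $({^{\rho}\mathcal{J}_{a^+}^{1-\beta(1-\alpha)}}f)(a)$ vanishes via Lemma~\ref{L3}) rather than Lemma~\ref{L6}, and your anticipated obstacles about verifying the $C^1_{1-\gamma}$ hypotheses are precisely the bookkeeping steps the paper spells out.
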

\begin{proof}
$(\Rightarrow)$ Let $\varphi\in{C}_{1-\gamma}^{\gamma}[a,b]$ be a solution of the problem 
\textnormal{Eq.(\ref{Cauchy})-Eq.(\ref{cond-inicial})}. We prove that $\varphi$ 
is also a solution of \textnormal{Eq.(\ref{Volterra})}. From the definition of 
${C}_{1-\gamma}^{\gamma}[a,b]$, \textnormal{Lemma \ref{Lem-3}} and using 
\textnormal{Definition \ref{Hilfer-Katugampola}}, we have
\begin{eqnarray*}
{^{\rho}\mathcal{J}_{a^+}^{1-\gamma}}\varphi\in{C}[a,b] \quad \textnormal{and} \quad {^{\rho}
\mathcal{D}_{a^+}^{\gamma}}\varphi=\delta_{\rho}\,{^{\rho}\mathcal{J}_{a^+}^{1-\gamma}}
\varphi\in{C}_{1-\gamma}[a,b].
\end{eqnarray*}
By \textnormal{Definition \ref{espacos}}, it follows that 
\begin{eqnarray*}
{^{\rho}\mathcal{J}_{a^+}^{1-\gamma}}\varphi\in{C}_{1-\gamma}^{1}[a,b].
\end{eqnarray*}
Using \textnormal{Lemma \ref{L4}}, with $\alpha=\gamma$, and 
\textnormal{Eq.(\ref{cond-inicial})}, we can write
\begin{eqnarray}
({^{\rho}\mathcal{J}_{a^+}^{\gamma}}\,{^{\rho}\mathcal{D}_{a^+}^{\gamma}}\varphi)(x)=
\varphi(x)-\frac{c}{\Gamma(\gamma)}\left(\frac{x^{\rho}-a^{\rho}}{\rho}\right)^{\gamma-1}, \label{Eq1-T1}
\end{eqnarray}
where $x\in(a,b]$. By hypothesis, ${^{\rho}\mathcal{D}_{a^+}^{\gamma}}\varphi\in{C}_{1-\gamma}[a,b]$, 
using \textnormal{Lemma \ref{L5}} with $\alpha=\gamma$ and \textnormal{Eq.(\ref{Cauchy})}, 
we have
\begin{eqnarray}
({^{\rho}\mathcal{J}_{a^+}^{\gamma}}\,{^{\rho}\mathcal{D}_{a^+}^{\gamma}}\varphi)(x)&=&
({^{\rho}\mathcal{J}_{a^+}^{\alpha}}\,{^{\rho}\mathcal{D}_{a^+}^{\alpha,\beta}}\varphi)(x)\nonumber\\
&=&({^{\rho}\mathcal{J}_{a^+}^{\alpha}}f(t,\varphi(t)))(x). \label{Eq2-T1}
\end{eqnarray}
Comparing \textnormal{Eq.(\ref{Eq1-T1})} and \textnormal{Eq.(\ref{Eq2-T1})}, we see that 
\begin{eqnarray}
\varphi(x)=\frac{c}{\Gamma(\gamma)}\left(\frac{x^{\rho}-a^{\rho}}{\rho}\right)^{\gamma-1}
+({^{\rho}\mathcal{J}_{a^+}^{\alpha}}f(t,\varphi(t)))(x), \label{Eq3-T1}
\end{eqnarray}
with $x\in(a,b]$, that is, $\varphi (x)$ satisfies \textnormal{Eq.(\ref{Volterra})}.\\
\\
$(\Leftarrow)$ Let $\varphi\in{C}_{1-\gamma}^{\gamma}[a,b]$ satisfying 
\textnormal{Eq.(\ref{Volterra})}. We prove that $\varphi$ also satisfies the problem 
\textnormal{Eq.(\ref{Cauchy})-Eq.(\ref{cond-inicial})}. Apply operator 
${^{\rho}\mathcal{D}_{a^+}^{\gamma}}$ on both sides of \textnormal{Eq.(\ref{Eq3-T1})}. 
Then, from \textnormal{Lemma \ref{L2}}, \textnormal{Lemma \ref{L5}} and 
\textnormal{Definition \ref{Hilfer-Katugampola}} we obtain 
\begin{eqnarray}
({^{\rho}\mathcal{D}_{a^+}^{\gamma}}\varphi)(x)=
({^{\rho}\mathcal{D}_{a^+}^{\beta(1-\alpha)}}f(t,\varphi(t)))(x). \label{Eq4-T1}
\end{eqnarray}
By hypothesis, ${^{\rho}\mathcal{D}_{a^+}^{\gamma}}\varphi\in{C}_{1-\gamma}[a,b]$;  
then, \textnormal{Eq.(\ref{Eq4-T1})} implies that 
\begin{eqnarray}
({^{\rho}\mathcal{D}_{a^+}^{\gamma}}\varphi)(x)&=&\delta_{\rho}\,{^{\rho}
\mathcal{J}_{a^+}^{1-\beta(1-\alpha)}}\varphi)(x)\nonumber\\
&=&({^{\rho}\mathcal{D}_{a^+}^{\beta(1-\alpha)}}
\varphi)\in{C}_{1-\gamma}[a,b]. \label{Eq5-T1}
\end{eqnarray}
As $f(\cdot,\varphi(\cdot))\in{C}_{1-\gamma}[a,b]$ and from \textnormal{Lemma \ref{Lem-3}}, 
follows
\begin{eqnarray}
{^{\rho}\mathcal{J}_{a^+}^{1-\beta(1-\alpha)}}f\in{C}_{1-\gamma}[a,b]. \label{Eq6-T1}
\end{eqnarray}
From \textnormal{Eq.(\ref{Eq5-T1})}, \textnormal{Eq.(\ref{Eq6-T1})} and 
\textnormal{Definition \ref{espacos}}, we obtain
\begin{eqnarray*}
{^{\rho}\mathcal{J}_{a^+}^{1-\beta(1-\alpha)}}\varphi\in{C}_{1-\gamma}^{1}[a,b].
\end{eqnarray*}
Applying operator ${^{\rho}\mathcal{J}_{a^+}^{\beta(1-\alpha)}}$ on both sides of 
\textnormal{Eq.(\ref{Eq5-T1})} and using \textnormal{Lemma \ref{L4}}, \textnormal{Lemma \ref{L2}} 
and \textnormal{Definition \ref{Hilfer-Katugampola}}, we have
\begin{eqnarray*}
({^{\rho}\mathcal{J}_{a^+}^{\beta(1-\alpha)}}\,{^{\rho}\mathcal{D}_{a^+}^{\gamma}}
\varphi)(x)&=&f(x,\varphi(x))
+\frac{({^{\rho}\mathcal{J}_{a^+}^{1-\beta(1-\alpha)}}
f(t,\varphi(t)))(a)}{\Gamma(\beta(1-\alpha))}\left(\frac{x^{\rho}-t^{\rho}}{\rho}\right)^{\beta(1-\alpha)-1}\\
&=&({^{\rho}\mathcal{D}_{a^+}^{\alpha,\beta}}\varphi)(x)=f(x,\varphi(x)),
\end{eqnarray*}
that is, \textnormal{Eq.(\ref{Cauchy})} holds. Next, we prove that if 
$\varphi\in{C}_{1-\gamma}^{\gamma}[a,b]$ satisfies \textnormal{Eq.(\ref{Volterra})}, 
it also satisfies \textnormal{Eq.(\ref{cond-inicial})}. To this end, we multiply both sides of 
\textnormal{Eq.(\ref{Eq3-T1})} by ${^{\rho}\mathcal{J}_{a^+}^{1-\gamma}}$ 
and use \textnormal{Lemma \ref{L2}} and \textnormal{Theorem \ref{semigrupo}} to get
\begin{eqnarray}
({^{\rho}\mathcal{J}_{a^+}^{1-\gamma}}\varphi)(x)=c+({^{\rho}\mathcal{J}_{a^+}^{1-\gamma(1-\alpha)}}
f(t,\varphi(t)))(x). \label{Eq7-T1}
\end{eqnarray}
Finally, taking $x\rightarrow{a}$ in \textnormal{Eq.(\ref{Eq7-T1})},
\textnormal{Eq.(\ref{cond-inicial})} follows.
\end{proof}
\section{Existence and uniqueness of solution for the Cauchy problem}

In this section we prove the existence and uniqueness of the solution for the
problem \textnormal{Eq.(\ref{Cauchy})-Eq.(\ref{cond-inicial})} in the space
${C}_{1-\gamma, \rho}^{\alpha,\beta}[a,b]$ defined in \textnormal{Property
\ref{espacos2}}, under the hypotheses of \textnormal{Theorem \ref{equivalence}}
and the Lipschitz condition on $f(\cdot,\varphi)$ \cite[p.136]{kilbas} with
respect to the second variable, that is, that $f(\cdot,\varphi)$ is bounded in a
region $G\subset\mathbb{R}$ such that

\begin{eqnarray}
{\lVert f(x,\varphi_{1})-f(x,\varphi_{1})\rVert}_{{C}_{1-\gamma,\rho}[a,b]}
\leq{A}{\lVert \varphi_{1}-\varphi_{2} \rVert}_{{C}_{1-\gamma,\rho}[a,b]}, \label{Lipschitz}
\end{eqnarray}
for all $x\in(a,b]$, and for all $\varphi_1,\varphi_2\in{G}$, where $A>0$ is constant.
\begin{theorem}\label{exis&uniq}
Let $0<\alpha<1$, $0\leq{\beta}\leq{1}$ and $\gamma=\alpha+\beta(1-\alpha)$. 
Let $f:(a,b]\times\mathbb{R}\rightarrow\mathbb{R}$ be a function such that  
$f(\cdot,\varphi(\cdot))\in{C}_{\mu,\rho}[a,b]$ for any  $\varphi\in{C}_{\mu,\rho}[a,b]$ 
with $1-\gamma\leq\mu<{1-\beta(1-\alpha)}$ and satisfying the Lipschitz condition 
\textnormal{Eq.(\ref{Lipschitz})} with respect to the second variable. Then, there exists
a unique solution $\varphi$ for the problem \textnormal{Eq.(\ref{Cauchy})-Eq.(\ref{cond-inicial})} 
in the space ${C}_{1-\gamma, \mu}^{\alpha,\beta}[a,b]$.
\begin{proof}
According to \textnormal{Theorem \ref{equivalence}}, we just have to prove that there exists
	a unique solution for the Volterra integral equation,
	\textnormal{Eq.(\ref{Volterra})}. This equation can be written as
\begin{eqnarray*}
\varphi(x)=T\varphi(x),
\end{eqnarray*}
where
\begin{eqnarray}
T\varphi(x)=\varphi_{0}(x)+[{^{\rho}\mathcal{J}_{a^+}^{\alpha}}f(t,\varphi(t))](x), \label{operator-T}
\end{eqnarray}
with
\begin{eqnarray}
\varphi_{0}(x)=\frac{c}{\Gamma(\gamma)}\left(\frac{x^{\rho}-a^{\rho}}{\rho}\right)^{\gamma-1}. \label{cond}
\end{eqnarray}


Thus, we divide the interval $(a,b]$ into
subintervals on which operator $T$ is a contraction; we then use Banach fixed point theorem,  
\textnormal{Theorem \ref{fixed-point}}.
Note that $\varphi\in{C}_{1-\gamma,\rho}[a,x_1]$, 
where $a=x_0<{x_1}<\ldots<{x_M}={b}$
and${C}_{1-\gamma,\rho}[a,x_1]$ is a complete metric space with metric 
\begin{eqnarray*}
d(\varphi_{1},\varphi_{2})={\lVert \varphi_{1}-\varphi_{2}\rVert}_{{C}_{1-\gamma,\rho}[a,x_1]}
=\max_{x\in[a,x_1]}{\bigg|\left(\frac{x^{\rho}-a^{\rho}}{\rho}\right)^{1-\gamma}
[\varphi_{1}-\varphi_{2}]\bigg|}.
\end{eqnarray*}
Choose $x_{1}\in(a,b]$ such that the inequality
\begin{eqnarray}
w_{1}=\frac{A\,\Gamma(\gamma)}{\Gamma(\alpha+\gamma)}
\left(\frac{{x_{1}}^{\rho}-a^{\rho}}{\rho}\right)^{\alpha}<1, \label{hypothesis}
\end{eqnarray}
where $A>0$ is a constant, holds, as in \textnormal{Eq.(\ref{Lipschitz})}.
Thus, $\varphi_{0}\in{C}_{1-\gamma,\rho}[a,x_1]$ and from \textnormal{Lemma \ref{Lem-3}}, 
we have $T\varphi\in{C}_{1-\gamma,\rho}[a,x_1]$ and $T$ maps ${C}_{1-\gamma,\rho}[a,x_1]$ 
into ${C}_{1-\gamma,\rho}[a,x_1]$. Therefore, from \textnormal{Eq.(\ref{Lipschitz})}, 
\textnormal{Eq.(\ref{operator-T})}, \textnormal{Lemma \ref{Lem-3}} and for any 
	$\varphi_{1},\varphi_{2}\in{C}_{1-\gamma,\rho}[a,x_1]$, we can write
\begin{eqnarray*}
{\lVert T\varphi_{1}-T\varphi_{2} \rVert}_{{C}_{1-\gamma}[a,x_1]}&=&{\lVert {^{\rho}
\mathcal{J}_{a^+}^{\alpha}}f(t,\varphi_{1}(t))-\,{^{\rho}\mathcal{J}_{a^+}^{\alpha}}
f(t,\varphi_{2}(t)) \rVert}_{{C}_{1-\gamma,\rho}[a,x_1]}\\
&=&{\lVert {^{\rho}\mathcal{J}_{a^+}^{\alpha}}[|f(t,\varphi_{1}(t))-f(t,\varphi_{2}(t))|] \rVert}_
{{C}_{1-\gamma,\rho}[a,x_1]}\\
&\leq&\left(\frac{{x_{1}}^{\rho}-a^{\rho}}{\rho}\right)^{\alpha}\frac{\Gamma(\gamma)}
{\Gamma(\alpha+\gamma)}\\
&\times&{\lVert f(t,\varphi_{1}(t))-f(t,\varphi_{2}(t)) \rVert}_
{{C}_{1-\gamma,\rho}[a,x_1]}\\
&\leq& A\left(\frac{{x_{1}}^{\rho}-a^{\rho}}{\rho}\right)^{\alpha}\frac{\Gamma(\gamma)}
{\Gamma(\alpha+\gamma)}{\lVert \varphi_{1}(t)-\varphi_{2}(t) \rVert}_{{C}_{1-\gamma,\rho}[a,x_1]}\\
&\leq&{w_1}{\lVert \varphi_{1}(t)-\varphi_{2}(t) \rVert}_{{C}_{1-\gamma,\rho}[a,x_1]}.
\end{eqnarray*}
By hypothesis \textnormal{Eq.(\ref{hypothesis})} we can use the Banach fixed point to get
a unique solution $\varphi^{*}\in{{C}_{1-\gamma,\rho}[a,x_1]}$ for \textnormal{Eq.(\ref{Volterra})} 
on the interval $(a,x_1]$. This solution $\varphi^{*}$ is obtained as a limit of a convergent sequence 
$T^{k}{\varphi}_{0}^{*}$:
\begin{eqnarray}
\lim_{k \to \infty}{\lVert T^{k}{\varphi}_{0}^{*}-{\varphi}^{*} \rVert}_{{C}_{1-\gamma,\rho}[a,x_1]}=0, \label{converg}
\end{eqnarray}
where ${\varphi}_{0}^{*}$ is any function in ${C}_{1-\gamma,\rho}[a,x_1]$ and
\begin{eqnarray*}
(T^{k}{\varphi}_{0}^{*})(x)=(T\,T^{k-1}{\varphi}_{0}^{*})(x)
={\varphi}_{0}(x)+[{^{\rho}\mathcal{J}_{a^+}^{\alpha}}
f(t,({T^{k-1}}{\varphi}_{0}^{*})(t))](x), \quad k\in\mathbb{N}.
\end{eqnarray*}
We take ${\varphi}_{0}^{*}(x)={\varphi}_{0}(x)$ with ${\varphi}_{0}(x)$ defined by
\textnormal{Eq.(\ref{cond})}. Denoting
\begin{eqnarray}
\varphi_{k}(x)=(T^{k}\varphi_{0}^{*})(x), \quad k\in\mathbb{N}, \label{Eq.Fix}
\end{eqnarray}
then \textnormal{Eq.(\ref{Eq.Fix})} admits the form 
\begin{eqnarray*}
\varphi_{k}(x)={\varphi}_{0}(x)+[{^{\rho}\mathcal{J}_{a^+}^{\alpha}}f(t,{\varphi_{k-1}}(t))](x), \quad k\in\mathbb{N}.
\end{eqnarray*}
On the other hand, \textnormal{Eq.(\ref{converg})} can be rewritten as
\begin{eqnarray*}
\lim_{k \to \infty}{\lVert {\varphi}_{k}-{\varphi}^{*} \rVert}_{{C}_{1-\gamma,\rho}[a,x_1]}=0.
\end{eqnarray*}
We consider the interval $[x_1,x_2]$, where $x_2=x_1+h_1$, $h_1>0$ and $x_2<b$, 
then by \textnormal{Eq.(\ref{Volterra})}, we can write
\begin{eqnarray*}
\varphi(x)&=&\frac{c}{\Gamma(\gamma)}\left(\frac{x^{\rho}-a^{\rho}}{\rho}\right)^{\gamma-1}\\
&+&\frac{1}{\Gamma(\alpha)}\int_{a}^{x_1}t^{\rho-1}\left(\frac{x^{\rho}-t^{\rho}}{\rho}\right)^{\alpha-1}
f(t,\varphi(t))dt\\
&+&\frac{1}{\Gamma(\alpha)}\int_{x_1}^{x}t^{\rho-1}\left(\frac{x^{\rho}-t^{\rho}}{\rho}\right)^{\alpha-1}
f(t,\varphi(t))dt\\
&=&\varphi_{01}(x)+\frac{1}{\Gamma(\alpha)}\int_{x_1}^{x}t^{\rho-1}\left(\frac{x^{\rho}-t^{\rho}}
{\rho}\right)^{\alpha-1}f(t,\varphi(t))dt,
\end{eqnarray*}
where $\varphi_{01}(x)$,  defined by
\begin{eqnarray*}
\varphi_{01}(x)&=&\frac{c}{\Gamma(\gamma)}\left(\frac{x^{\rho}-a^{\rho}}{\rho}\right)^{\gamma-1}
+\frac{1}{\Gamma(\alpha)}\int_{a}^{x_1}t^{\rho-1}\left(\frac{x^{\rho}-t^{\rho}}{\rho}\right)^{\alpha-1}
f(t,\varphi(t))dt,
\end{eqnarray*}
is the known function and $\varphi_{01}(x)\in{{C}_{1-\gamma,\rho}[x_1,x_2]}$. Using the same
arguments as above, we conclude that there exists a unique solution $\varphi^{*}\in{C}_{1-\gamma,\rho}[x_1,x_2]$ 
for \textnormal{Eq.(\ref{Volterra})} on the interval $[x_1,x_2]$. The next interval to be
considered is $[x_2,x_3]$, where $x_3=x_2+h_2$, $h_2>0$ and $x_3<b$. Repeating this process,
we conclude that there exists a unique solution $\varphi^{*}\in{C}_{1-\gamma,\rho}[a,b]$ for
\textnormal{Eq.(\ref{Volterra})} on the interval $[a,b]$.
We must show that such unique solution $\varphi^{*}\in{C}_{1-\gamma,\rho}[a,b]$ is also in 
${C}_{1-\gamma,\mu}^{\alpha,\beta}[a,b]$. Thus, we need show that 
$({^{\rho}\mathcal{D}^{\alpha,\beta}_{a^+}}\varphi^{*})\in{C}_{\mu,\rho}[a,b]$.
We emphasize that $\varphi^{*}$ is the limit of the sequence $\varphi_{k}$, where 
$\varphi_{k}=T^{k}{\varphi_{0}^{*}}\in{C}_{1-\gamma,\rho}[a,b]$, that is,
\begin{eqnarray}
\lim_{k \to \infty}{\lVert \varphi_{k}-\varphi^{*} \rVert}_{C_{1-\gamma,\rho}[a,b]}=0, \label{norm}
\end{eqnarray}
for an adequate choice of $\varphi_{0}^{*}(x)$ on each subinterval $[a,x_1]$,
\ldots,$[x_{M-1},b]$. If $\varphi_{0}(x)\neq{0}$, then we can admit $\varphi_{0}^{*}(x)=\varphi_{0}(x)$
and once $\mu\geq{1-\gamma}$, from Lipschitz condition, \textnormal{Eq.(\ref{Lipschitz})}, 
and by \textnormal{Lemma \ref{L0}}, we can write
\begin{footnotesize}
\begin{eqnarray}
{\lVert {^{\rho}\mathcal{D}^{\alpha,\beta}_{a^+}}\varphi_{k} -\, 
{^{\rho}\mathcal{D}^{\alpha,\beta}_{a^+}}\varphi^{*} \rVert}_{C_{\mu,\rho}[a,b]}
&=&{\lVert f(x,\varphi_{k})-f(x,\varphi^{*}) \rVert}_{C_{\mu,\rho}[a,b]}\nonumber\\
&\leq&A\left(\frac{b^{\rho}-a^{\rho}}{\rho}\right)^{\mu-1+\gamma}
{\lVert \varphi_{k}-\varphi^{*} \rVert}_{C_{1-\gamma,\rho}[a,b]}. \label{norm-der}
\end{eqnarray}
\end{footnotesize}
By \textnormal{Eq.(\ref{norm})} and \textnormal{Eq.(\ref{norm-der})}, we obtain
\begin{eqnarray*}
\lim_{k \to \infty}{\lVert {^{\rho}\mathcal{D}^{\alpha,\beta}_{a^+}}
\varphi_{k}-\, {^{\rho}\mathcal{D}^{\alpha,\beta}_{a^+}}\varphi^{*} \rVert}_{C_{\mu,\rho}[a,b]}=0.
\end{eqnarray*}
From this last expression, we have
$({^{\rho}\mathcal{D}^{\alpha,\beta}_{a^+}}\varphi^{*}) \in{C}_{\mu,\rho}[a,b]$
if
$({^{\rho}\mathcal{D}^{\alpha,\beta}_{a^+}}\varphi_{k})\in{C}_{\mu,\rho}[a,b]$,
$k=1,2,\ldots$ Since
$({^{\rho}\mathcal{D}^{\alpha,\beta}_{a^+}}\varphi_{k})(x)=f(x,\varphi_{k-1}(x))$,
then by the previous argument, we obtain that
$f(\cdot,\varphi^{*}(\cdot))\in{C}_{\mu,\rho}[a,b]$ for any
$\varphi^{*}\in{C}_{\mu,\rho}[a,b]$. Consequently,
$\varphi^{*}\in{C}_{1-\gamma,\mu}^{\alpha,\beta}[a,b]$.
\end{proof}
\end{theorem}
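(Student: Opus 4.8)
The plan is to reduce the theorem to the existence and uniqueness of a solution for the Volterra integral equation \textnormal{Eq.(\ref{Volterra})}, which is legitimate by the equivalence established in \textnormal{Theorem \ref{equivalence}} (whose hypotheses are implied by the present ones, since $C_{\mu,\rho}\subset C_{1-\gamma,\rho}$ by \textnormal{Lemma \ref{L0}} when $\mu\geq 1-\gamma$). Writing the integral equation as a fixed-point equation $\varphi = T\varphi$ with $T$ as in \textnormal{Eq.(\ref{operator-T})}, the aim is to apply the Banach fixed point theorem, \textnormal{Theorem \ref{fixed-point}}, in the complete metric space $C_{1-\gamma,\rho}[a,b]$. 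Since the contraction constant $w_1$ from \textnormal{Eq.(\ref{hypothesis})} involves $\left((x^\rho-a^\rho)/\rho\right)^\alpha$, which need not be small on all of $[a,b]$, the standard device is to partition $(a,b]$ as $a=x_0<x_1<\cdots<x_M=b$ with each length chosen so that the analogous constant on $[x_{j-1},x_j]$ is strictly less than $1$; on each piece $T$ is a contraction, so it has a unique fixed point there, and gluing these (using \textnormal{Lemma \ref{L1}} for continuity at the junction points) produces a unique $\varphi^*\in C_{1-\gamma,\rho}[a,b]$ solving \textnormal{Eq.(\ref{Volterra})}.

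First I would verify that $T$ maps $C_{1-\gamma,\rho}[a,x_1]$ into itself: the term $\varphi_0$ lies there by inspection of \textnormal{Eq.(\ref{cond})}, and $^{\rho}\mathcal{J}_{a^+}^{\alpha}f(t,\varphi(t))$ lies there by \textnormal{Lemma \ref{Lem-3}} together with the hypothesis $f(\cdot,\varphi(\cdot))\in C_{\mu,\rho}\subset C_{1-\gamma,\rho}$. Next, for $\varphi_1,\varphi_2\in C_{1-\gamma,\rho}[a,x_1]$, I would estimate $\|T\varphi_1-T\varphi_2\|$ by pulling the difference inside the integral, applying the Lipschitz condition \textnormal{Eq.(\ref{Lipschitz})}, and then using the explicit action of $^{\rho}\mathcal{J}_{a^+}^{\alpha}$ on the weight $\left((t^\rho-a^\rho)/\rho\right)^{\gamma-1}$ from \textnormal{Lemma \ref{L2}} to obtain the bound $w_1\|\varphi_1-\varphi_2\|$ with $w_1<1$ by the choice \textnormal{Eq.(\ref{hypothesis})}. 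Then \textnormal{Theorem \ref{fixed-point}} gives a unique fixed point on $(a,x_1]$, realized as the limit of the iterates $\varphi_k=T^k\varphi_0^*$ with $\varphi_0^*=\varphi_0$. Moving to $[x_1,x_2]$ I would split the integral in \textnormal{Eq.(\ref{Volterra})} at $x_1$, absorbing $\int_a^{x_1}$ into a known function $\varphi_{01}\in C_{1-\gamma,\rho}[x_1,x_2]$, and repeat the contraction argument; iterating finitely many times covers $[a,b]$.

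The remaining and more delicate part is to upgrade $\varphi^*\in C_{1-\gamma,\rho}[a,b]$ to $\varphi^*\in C_{1-\gamma,\mu}^{\alpha,\beta}[a,b]$, i.e.\ to show $^{\rho}\mathcal{D}^{\alpha,\beta}_{a^+}\varphi^*\in C_{\mu,\rho}[a,b]$. Here I would use that $\varphi^*=\lim_k\varphi_k$ in $C_{1-\gamma,\rho}[a,b]$ and that, because $\varphi_k$ solves the integral equation with $f(t,\varphi_{k-1}(t))$ in the integrand, \textnormal{Eq.(\ref{Cauchy})} gives $^{\rho}\mathcal{D}^{\alpha,\beta}_{a^+}\varphi_k = f(\cdot,\varphi_{k-1})$; combining the Lipschitz condition \textnormal{Eq.(\ref{Lipschitz})} with the embedding estimate of \textnormal{Lemma \ref{L0}} (which introduces the factor $\left((b^\rho-a^\rho)/\rho\right)^{\mu-1+\gamma}$, finite precisely because $\mu\geq 1-\gamma$) yields $\|^{\rho}\mathcal{D}^{\alpha,\beta}_{a^+}\varphi_k - {}^{\rho}\mathcal{D}^{\alpha,\beta}_{a^+}\varphi^*\|_{C_{\mu,\rho}}\to 0$. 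Since each $f(\cdot,\varphi_{k-1})\in C_{\mu,\rho}[a,b]$ and $C_{\mu,\rho}[a,b]$ is complete, the limit $^{\rho}\mathcal{D}^{\alpha,\beta}_{a^+}\varphi^*$ lies in $C_{\mu,\rho}[a,b]$, so $\varphi^*\in C_{1-\gamma,\mu}^{\alpha,\beta}[a,b]$. The main obstacle I anticipate is bookkeeping the constraint $1-\gamma\leq\mu<1-\beta(1-\alpha)$ throughout: the lower bound is what makes the embedding $C_{\mu,\rho}\hookrightarrow C_{1-\gamma,\rho}$ and the estimate \textnormal{Eq.(\ref{norm-der})} valid, while the upper bound is what guarantees $^{\rho}\mathcal{J}_{a^+}^{1-\beta(1-\alpha)}f$ behaves well and $f(\cdot,\varphi(\cdot))$ genuinely ranges in $C_{\mu,\rho}$; one must check that all intermediate weighted-space memberships respect these inequalities and that the finitely many subintervals can indeed be chosen (the lengths $h_j$ stay bounded below away from the accumulation issue because $\alpha>0$ makes the map $x_j\mapsto$ admissible length continuous and positive).
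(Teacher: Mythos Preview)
Your proposal is correct and follows essentially the same route as the paper: reduction to the Volterra equation via \textnormal{Theorem \ref{equivalence}}, contraction mapping on successive subintervals of $(a,b]$ in the weighted space $C_{1-\gamma,\rho}$, and then the upgrade to $C_{1-\gamma,\mu}^{\alpha,\beta}$ via the Lipschitz condition and the embedding of \textnormal{Lemma \ref{L0}}. Your explicit mention of \textnormal{Lemma \ref{L1}} for gluing and your discussion of the role of both endpoints of the constraint $1-\gamma\leq\mu<1-\beta(1-\alpha)$ are clarifications the paper leaves implicit, but the architecture of the argument is identical.
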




\section{Cauchy-type problems for fractional differential equations}

In this section, we present explicit solutions to fractional differential
equations involving the Hilfer-Katugampola differential operator
$({^{\rho}\mathcal{D}^{\alpha,\beta}_{a^+}}\varphi)(x)$ of order $0<\alpha<1$
and type $0\leq{\beta}\leq{1}$ in the space
${C}_{1-\gamma,\rho}^{\alpha,\beta}[a,b]$ defined in \textnormal{Property
\ref{espacos2}}.

We consider the following Cauchy problem
\begin{eqnarray}
&&({^{\rho}\mathcal{D}^{\alpha,\beta}_{a^+}}\varphi)(x)-\lambda\varphi(x)=f(x), \quad 0<\alpha<1,\quad 0\leq{\beta}\leq{1}, \label{Cauchy1}\\
&&(^{\rho}\mathcal{J}_{a^+}^{1-\gamma})(a)=c, \quad \gamma=\alpha+\beta(1-\alpha), \label{Cauchy2}
\end{eqnarray}
where $c,\lambda\in\mathbb{R}$.
We suppose that $f(x)\in{{C}_{\mu,\rho}[a,b]}$ with $0\leq{\mu}<1$ and $\rho>0$. 
Then, by \textnormal{Theorem \ref{equivalence}}, the problem \textnormal{Eq.(\ref{Cauchy1})-Eq.(\ref{Cauchy2})} 
is equivalent to solve the following integral equation
\begin{eqnarray}
\varphi(x)&=&\frac{c}{\Gamma(\gamma)}\left(\frac{x^{\rho}-a^{\rho}}{\rho}\right)^{\gamma-1}
+\frac{\lambda}{\Gamma(\alpha)}\int_{a}^{x}t^{\rho-1}\left(\frac{x^{\rho}-t^{\rho}}{\rho}\right)^{\alpha-1}\varphi(t)dt \nonumber\\
&+&\frac{1}{\Gamma(\alpha)}\int_{a}^{x}t^{\rho-1}\left(\frac{x^{\rho}-t^{\rho}}{\rho}\right)^{\alpha-1}f(t)dt. \label{Volt}
\end{eqnarray}
In order to solve \textnormal{Eq.(\ref{Volt})}, we use the method of successive approximations,
that is,
\begin{eqnarray}
\varphi_{0}(x)=\frac{c}{\Gamma(\gamma)}\left(\frac{x^{\rho}-a^{\rho}}{\rho}\right)^{\gamma-1}, \label{phi0}
\end{eqnarray}
\begin{eqnarray}
\varphi_{k}(x)&=&\varphi_{0}(x)+\frac{\lambda}{\Gamma(\alpha)}\int_{a}^{x}t^{\rho-1}
\left(\frac{x^{\rho}-t^{\rho}}{\rho}\right)^{\alpha-1}\varphi_{k-1}(t)dt \nonumber\\
&+&\frac{1}{\Gamma(\alpha)}\int_{a}^{x}t^{\rho-1}\left(\frac{x^{\rho}-t^{\rho}}
{\rho}\right)^{\alpha-1}f(t)dt, \quad (k\in\mathbb{N}). \label{phi-k}
\end{eqnarray}
Using \textnormal{Eq.(\ref{int-gen-esq})},  \textnormal{Eq.(\ref{phi0})} and 
\textnormal{Lemma \ref{L2}}, we have the following expression for $\varphi_{1}(x)$: 
\begin{eqnarray}
\varphi_{1}(x)&=&\varphi_{0}(x)+({^{\rho}\mathcal{J}_{a^+}^{\alpha}}\varphi_{0})(x)
+({^{\rho}\mathcal{J}_{a^+}^{\alpha}}f)(x) \nonumber\\
&=&c\sum_{j=1}^{2}\frac{\lambda^{j-1}}{\Gamma(\alpha{j}+\beta(1-\alpha))}
\left(\frac{x^{\rho}-a^{\rho}}{\rho}\right)^{\alpha{j}+\beta(1-\alpha)-1}+
({^{\rho}\mathcal{J}_{a^+}^{\alpha}}f)(x). \label{phi-1}
\end{eqnarray}
Similarly, using \textnormal{Eq.(\ref{phi0})}, \textnormal{Eq.(\ref{phi-k})},
\textnormal{Eq.(\ref{phi-1})} and \textnormal{Property \ref{semigrupo}}, we get  
an expression for $\varphi_{2}(x)$, as follows: 
\begin{eqnarray*}
\varphi_{2}(x)&=&\varphi_{0}(x)+({^{\rho}\mathcal{J}_{a^+}^{\alpha}}\varphi_{1})(x)
+({^{\rho}\mathcal{J}_{a^+}^{\alpha}}f)(x) \nonumber\\
&=&c\sum_{j=1}^{3}\frac{\lambda^{j-1}}{\Gamma(\alpha{j}+\beta(1-\alpha))}
\left(\frac{x^{\rho}-a^{\rho}}{\rho}\right)^{\alpha{j}+\beta(1-\alpha)-1}\\
&+&\lambda({^{\rho}\mathcal{J}_{a^+}^{\alpha}}{^{\rho}\mathcal{J}_{a^+}^{\alpha}}f)(x)+
({^{\rho}\mathcal{J}_{a^+}^{\alpha}}f)(x) \label{phi-2} \\
&=&c\sum_{j=1}^{3}\frac{\lambda^{j-1}}{\Gamma(\alpha{j}+\beta(1-\alpha))}
\left(\frac{x^{\rho}-a^{\rho}}{\rho}\right)^{\alpha{j}+\beta(1-\alpha)-1}\\
&+&\int_{a}^{x}\sum_{j=1}^{2}\frac{\lambda^{j-1}}{\Gamma(\alpha{j})}\,t^{\rho-1}
\left(\frac{x^{\rho}-t^{\rho}}{\rho}\right)^{\alpha{j}-1}f(t)dt.
\end{eqnarray*}
Continuing this process, the expression for $\varphi_{k}(x)$ is given by
\begin{eqnarray*}
\varphi_{k}(x)&=&c\sum_{j=1}^{k+1}\frac{\lambda^{j-1}}{\Gamma(\alpha{j}+
\beta(1-\alpha))}\left(\frac{x^{\rho}-a^{\rho}}{\rho}\right)^{\alpha{j}+
\beta(1-\alpha)-1}\\
&+&\int_{a}^{x}\sum_{j=1}^{k}\frac{\lambda^{j-1}}
{\Gamma(\alpha{j})}\,t^{\rho-1}\left(\frac{x^{\rho}-t^{\rho}}{\rho}\right)^{\alpha{j}-1}f(t)dt.
\end{eqnarray*}
Taking the limit $k\rightarrow{\infty}$, we obtain the expression for $\varphi(x)$, that is,
\begin{eqnarray*}
\varphi(x)&=&c\sum_{j=1}^{\infty}\frac{\lambda^{j-1}}{\Gamma(\alpha{j}+\beta(1-\alpha))}
\left(\frac{x^{\rho}-a^{\rho}}{\rho}\right)^{\alpha{j}+\beta(1-\alpha)-1}\\
&+&\int_{a}^{x}\sum_{j=1}^{\infty}\frac{\lambda^{j-1}}{\Gamma(\alpha{j})}\,
t^{\rho-1}\left(\frac{x^{\rho}-t^{\rho}}{\rho}\right)^{\alpha{j}-1}f(t)dt.
\end{eqnarray*}
Changing the summation index in this last expression, $j\rightarrow{j+1}$, we have
\begin{eqnarray*}
\varphi(x)&=&c\sum_{j=0}^{\infty}\frac{\lambda^{j}}{\Gamma(\alpha{j}+\gamma)}
\left(\frac{x^{\rho}-a^{\rho}}{\rho}\right)^{\alpha{j}+\gamma-1}\\
&+&\int_{a}^{x}
\sum_{j=0}^{\infty}\frac{\lambda^{j}}{\Gamma(\alpha{j}+\alpha)}\,t^{\rho-1}
\left(\frac{x^{\rho}-t^{\rho}}{\rho}\right)^{\alpha{j}+\alpha-1}f(t)dt . 
\end{eqnarray*}
Using \textnormal{Definition \ref{Mittag}}, we can rewrite the solution in terms of 
two-parameters Mittag-Leffler functions,
\begin{eqnarray}
\varphi(x)&=&c\left(\frac{x^{\rho}-a^{\rho}}{\rho}\right)^{\gamma-1}E_{\alpha,\gamma}
\left[\lambda\left(\frac{x^{\rho}-a^{\rho}}{\rho}\right)^{\alpha}\right]\nonumber\\
&+&\int_{a}^{x}t^{\rho-1}\left(\frac{x^{\rho}-t^{\rho}}{\rho}\right)^{\alpha-1}
E_{\alpha,\alpha}\left[\lambda\left(\frac{x^{\rho}-t^{\rho}}{\rho}\right)^{\alpha}\right]
f(t)dt. \label{sol-Cauchy}
\end{eqnarray}
The function $f(x,\varphi)=\lambda{\varphi(x)}+f(x)$ satisfies the Lipschitz condition, 
\textnormal{Eq.(\ref{Lipschitz})}, for any $x_1,x_2\in(a,b]$ and any $y\in{G}$, where $G$ 
is an open set on $\mathbb{R}$. If $\mu\geq{1-\gamma}$, then by \textnormal{Theorem \ref{exis&uniq}}, 
the problem \textnormal{Eq.(\ref{Cauchy1})-Eq.(\ref{Cauchy2})} has a unique solution
given by \textnormal{Eq.(\ref{sol-Cauchy})} in the space ${C}_{1-\gamma,\mu}^{\alpha,\beta}[a,b]$.
Note that the problem \textnormal{Eq.(\ref{Cauchy1})-Eq.(\ref{Cauchy2})}, whose solution
is given by \textnormal{Eq.(\ref{sol-Cauchy})}, includes the following particular cases:

\begin{itemize}

\item If $\rho\rightarrow{1}$ and $\beta=0$, then $\gamma=\alpha$ and we
have a problem involving the Riemann-Liouville fractional derivative; its
solution can be found in \cite[p.224]{kilbas}.

\item For $\rho\rightarrow{1}$ and $\beta=1$ our derivative becomes 
the Caputo fractional derivative; the solution can be found in \cite[p.231]{kilbas}.

\item Considering $\rho\rightarrow{0^+}$ and $\beta=0$, we have $\gamma=\alpha$ 
we have a Cauchy problem formulated with 
the Hadamard fractional derivative; the solution can be found in \cite[p.235]{kilbas}.

\item Other particular cases arise when we vary the parameters as described in 
\textnormal{Property \ref{Hilfer-Katugampola-parameters}}. 
Some of them are presented below.

\end{itemize}
A special case is the case when $f(x)=0$; we then have the following problem
\begin{eqnarray}
&&({^{\rho}\mathcal{D}^{\alpha,\beta}_{a^+}}\varphi)(x)-\lambda\varphi(x)=0, 
\quad 0<\alpha<1,\quad 0\leq{\beta}\leq{1},\\
&&(^{\rho}\mathcal{J}_{a^+}^{1-\gamma})(a)=c, \quad \gamma=\alpha+\beta(1-\alpha),
\end{eqnarray}
with $\lambda\in\mathbb{R}$ and $a<{x}\leq{b}$. The solution is given by
\begin{eqnarray}
\varphi(x)=c\left(\frac{x^{\rho}-a^{\rho}}{\rho}\right)^{\gamma-1}E_{\alpha,\gamma}
\left[\lambda\left(\frac{x^{\rho}-a^{\rho}}{\rho}\right)^{\alpha}\right].
\end{eqnarray}
Also, consider the following Cauchy problem:
\begin{eqnarray}
&&({^{\rho}\mathcal{D}^{\alpha,\beta}_{a^+}}\varphi)(x)-\lambda\left(\frac{x^{\rho}-a^{\rho}}{\rho}\right)
^{\xi}\varphi(x)=0, \; 0<\alpha<1,\; 0\leq{\beta}\leq{1},  \label{p-Cauchy1}\\
&&(^{\rho}\mathcal{J}_{a^+}^{1-\alpha})(a)=c, \quad c\in\mathbb{R}, \quad \rho>0 
, \label{p-Cauchy2}
\end{eqnarray}
with $\lambda,\xi\in\mathbb{R}$, $a<{x}\leq{b}$ and $\xi>-\alpha$. We suppose 
$\displaystyle \left[\lambda\left(\frac{x^{\rho}-a^{\rho}}{\rho}\right)^{\xi}\varphi\right]\in{{C}_{1-\alpha,\rho}[a,b]}$. 
Then, by \textnormal{Theorem \ref{equivalence}}, the problem \textnormal{Eq.(\ref{p-Cauchy1})-Eq.(\ref{p-Cauchy2})} 
is equivalent to the following integral equation: 
\begin{eqnarray}
\varphi(x)&=&\frac{c}{\Gamma(\alpha)}\left(\frac{x^{\rho}-a^{\rho}}{\rho}\right)^{\alpha-1}
+\frac{\lambda}{\Gamma(\alpha)}\int_{a}^{x}t^{\rho-1}\left(\frac{x^{\rho}-t^{\rho}}{\rho}\right)^{\alpha-1}
\left(\frac{t^{\rho}-a^{\rho}}{\rho}\right)^{\xi}\varphi(t)dt. \label{p-Volt}
\end{eqnarray}
We apply the method of successive approximations to solve the integral equation 
\textnormal{Eq.(\ref{p-Volt})}, that is, we consider
\begin{eqnarray}
\varphi_{0}(x)=\frac{c}{\Gamma(\alpha)}\left(\frac{x^{\rho}-a^{\rho}}{\rho}\right)^{\alpha-1}
\end{eqnarray}
and
\begin{eqnarray}
\varphi_{k}(x)&=&\varphi_{0}(x)
+\frac{\lambda}{\Gamma(\alpha)}\int_{a}^{x}t^{\rho-1}
\left(\frac{x^{\rho}-t^{\rho}}{\rho}\right)^{\alpha-1}\left(\frac{t^{\rho}-a^{\rho}}{\rho}\right)^{\xi}\varphi_{k-1}(t)dt.
\end{eqnarray}
For $k=1$ and using \textnormal{Lemma \ref{L2}}, we have
\begin{eqnarray}
\varphi_{1}(x)&=&\varphi_{0}(x)+\lambda\left({^{\rho}\mathcal{J}_{a^+}^{\alpha}}
\left(\frac{t^{\rho}-a^{\rho}}{\rho}\right)^{\xi}\varphi_{0}\right)(x)\nonumber\\
&=&\frac{c}{\Gamma(\alpha)}\left(\frac{x^{\rho}-a^{\rho}}{\rho}\right)^{\alpha-1}+\frac{c\lambda}
{\Gamma(\alpha)}\frac{\Gamma(\alpha+\xi)}{\Gamma(2\alpha+\xi)}
\left(\frac{x^{\rho}-a^{\rho}}{\rho}\right)^{2\alpha+\xi-1}.
\end{eqnarray}
For $k=2$ and using again \textnormal{Lemma \ref{L2}}, we can write
\begin{eqnarray*}
\varphi_{2}(x)&=&\varphi_{0}(x)+\lambda\left({^{\rho}\mathcal{J}_{a^+}^{\alpha}}
\left(\frac{t^{\rho}-a^{\rho}}{\rho}\right)^{\xi}\varphi_{1}\right)(x)\nonumber\\
&=&\varphi_{0}(x)+\frac{c\lambda}{\Gamma(\alpha)}\left({^{\rho}\mathcal{J}_{a^+}^{\alpha}}
\left(\frac{t^{\rho}-a^{\rho}}{\rho}\right)^{\alpha+\xi-1}\right)(x)\nonumber\\
&+&\frac{c\lambda^2}{\Gamma(\alpha)}\frac{\Gamma(\alpha+\xi)}{\Gamma(2\alpha+\xi)}\left({^{\rho}
\mathcal{J}_{a^+}^{\alpha}}\left(\frac{t^{\rho}-a^{\rho}}{\rho}\right)^{2\alpha+2\xi-1}\right)(x)\nonumber\\
&=&\frac{c}{\Gamma(\alpha)}\left(\frac{x^{\rho}-a^{\rho}}{\rho}\right)^{\alpha-1}
\left\{1+{c_1}\left[\lambda\left(\frac{x^{\rho}-a^{\rho}}{\rho}\right)^{\alpha+\xi}\right]\right.\\
&+&\left.{c_2}\left[\lambda\left(\frac{x^{\rho}-a^{\rho}}{\rho}\right)^{\alpha+\xi}\right]^{2}\right\},
\end{eqnarray*}
where
\begin{eqnarray}
c_1=\frac{\Gamma(\alpha+\xi)}{\Gamma(2\alpha+\xi)} \quad \mbox{and} \quad 
c_2=\frac{\Gamma(\alpha+\xi)}{\Gamma(2\alpha+\xi)}\frac{\Gamma(2\alpha+2\xi)}{\Gamma(3\alpha+2\xi)}.
\end{eqnarray}
Continuing this process, we obtain the expression for $\varphi_{k}(x)$, given by
\begin{eqnarray}
\varphi_{k}(x)&=&\frac{c}{\Gamma(\alpha)}\left(\frac{x^{\rho}-a^{\rho}}{\rho}\right)^{\alpha-1}
\left\{1+\sum_{j=1}^{k}c_{j}\left[\lambda\left(\frac{x^{\rho}-a^{\rho}}{\rho}\right)^{\alpha+\xi}\right]^{j}\right\},
\label{solution}
\end{eqnarray}
where
\begin{eqnarray}
c_j=\prod_{r=1}^{j}\frac{\Gamma[r(\alpha+\xi)]}{\Gamma[r(\alpha+\xi)+\alpha]}, \quad j\in\mathbb{N}.
\end{eqnarray}
Using \textnormal{Definition \ref{Mittag-generalized}} we can write the solution,
\textnormal{Eq.(\ref{solution})}, in terms of a generalized Mittag-Leffler function:
\begin{eqnarray}
\varphi_{k}(x)&=&\frac{c}{\Gamma(\alpha)}\left(\frac{x^{\rho}-a^{\rho}}{\rho}\right)^{\alpha-1}
\times E_{\alpha,1+\xi/\alpha,1+(\xi-1)/\alpha}\left[\lambda\left(\frac{x^{\rho}-a^{\rho}}{\rho}\right)^{\alpha+\xi}\right].
\label{final-solution}
\end{eqnarray}
If $\xi\geq{0}$, then $\displaystyle
f(x,\varphi)=\lambda\left(\frac{x^{\rho}-a^{\rho}}{\rho}\right)^{\xi}\varphi(x)$
satisfies the Lipschitz condition, \textnormal{Eq.(\ref{Lipschitz})}, for any
$x_1,x_2\in(a,b]$ and for all $\varphi_1,\varphi_2\in{G}$, where $G$ is an
open set on $\mathbb{R}$. If $\mu\geq{1-\gamma}$, then by \textnormal{Theorem
\ref{exis&uniq}}, there exists a unique solution to the problem
\textnormal{Eq.(\ref{p-Cauchy1})-Eq.(\ref{p-Cauchy2})}, given by
\textnormal{Eq.(\ref{final-solution})}, in space
${C}_{1-\gamma,\mu}^{\alpha,\beta}[a,b]$.
Note that the problem \textnormal{Eq.(\ref{p-Cauchy1})-Eq.(\ref{p-Cauchy2})}, whose solution
is given by \textnormal{Eq.(\ref{final-solution})}, admits the following particular cases:

\begin{itemize}

\item For $\rho\rightarrow{1}$ and $\beta=0$, we have formulation for this
problem, as well as it solution considering the Riemann-Liouville
fractional derivative which can be found in
\cite[p.227]{kilbas}.

\item Consider $\rho\rightarrow{1}$ and $\beta=1$, we have formulation of the
problem and it solution, considering the Caputo fractional derivative,
which can be found in \cite[p.233]{kilbas}.

\item For $\rho\rightarrow{0^+}$ and $\beta=0$, we have  formulation for this
Cauchy problem and its solution considering the Hadamard fractional
derivative that can be found in \cite[p.237]{kilbas}.

\item According to the parameters presented in \textnormal{Property
\ref{Hilfer-Katugampola-parameters}} it is also possible to obtain
other particular cases. 

\end{itemize}

We have presented a new fractional derivative, the 
Hilfer-Katugampola fractional derivative. This formulation admits as particular
cases the well-known fractional derivatives of Hilfer, Hilfer-Hadamard,
Riemann-Liouville, Hadamard, Caputo, Caputo-Hadamard, generalized, 
Caputo-type, Weyl and Liouville. 

The equivalence between a nonlinear initial value problem and a Volterra
integral equation was proved. We also discussed the existence and uniqueness of
the solution for this initial value problem. Finally, we obtained the
analytical solutions, using the method of successive approximations, to some
fractional differential equations; some particular cases were recovered.

A possible continuation of this paper consists in defining a fractional integral whose 
kernel contains a Mittag-Leffler function and, using such definition, solving an initial
value problem similar to the one discussed in this paper. This will be published in a
future work \cite{daniela}.

\section*{Acknowledgment} 

We would like to thank the anonymous referees and Dr. J. Em\'{\i}lio Maiorino
for several suggestions and comments that helped improve the paper.




\begin{thebibliography}{10}

\bibitem{adjabi}
Y.~Adjabi, F.~Jarad, D.~Baleanu, and T.~Abdeljawad.
\newblock On {C}auchy problems with {C}aputo-{H}adamard fractional derivatives.
\newblock {\em J. Comp. Anal. App.}, 21(1):661--681, 2016.

\bibitem{almeida}
R.~Almeida, A.~B. Malinowska, and T.~Odzijewicz.
\newblock Fractional differential equations with dependence on the
  {C}aputo–{K}atugampola derivative.
\newblock {\em J. Comput. Nonlinear Dynam.}, 11(6):11 pages, 2016.

\bibitem{ecotenreiro}
E.~Capelas{ }de{ }Oliveira and J.~A. Tenreiro{ }Machado.
\newblock A review of definitions for fractional derivatives and integral.
\newblock {\em Math. Prob. Eng.}, 2014:6, 2014.

\bibitem{bhairat}
D.~B. Dhaigude and S.~P. Bhairat.
\newblock Existence and uniqueness of solution of {C}auchy-type problem for
  {H}ilfer fractional differential equations.
\newblock {\em eprint arXiv:1704.02174 (2017)}, page 10 pages.

\bibitem{rubeco}
R.~Figueiredo{ }Camargo and E.~Capelas{ }de{ }Oliveira.
\newblock {\em Fractional Calculus (in Portuguese)}.
\newblock Edi\-to\-ra Livraria da F\'{\i}sica, S\~{a}o Paulo, 2015.

\bibitem{Furati}
K.M. Furati, M.D. Kassim, and N.-E. Tatar.
\newblock Existence and uniqueness for a problem involving {H}ilfer fractional
  derivative.
\newblock {\em Comp. Math. Appl.}, 64(6):1616 -- 1626, 2012.

\bibitem{gambo}
Y.~Y. Gambo, F.~Jarad, D.~Baleanu, and T.~Abdeljawad.
\newblock On {C}aputo modification of the {H}adamard fractional derivatives.
\newblock {\em Adv. Diff. Equat.}, 2014(1):1--12, 2014.

\bibitem{Garra}
R.~Garra, R.~Gorenflo, F.~Polito, and $\check{\rm{Z}}$. Tomovski.
\newblock Hilfer–{P}rabhakar derivatives and some applications.
\newblock {\em Appl. Math. Comp.}, 242:576 -- 589, 2014.

\bibitem{Hilfer}
R.~Hilfer.
\newblock {\em Fractional time evolution: Applications in Fractional Calculus
  in Physics}.
\newblock World Scientific, London, 2000.

\bibitem{R.Hilfer}
Rudolf Hilfer.
\newblock {\em Threefold Introduction to Fractional Derivatives: in Anomalous
  Transport}, pages 17--73.
\newblock Wiley-VCH Verlag GmbH \& Co. KGaA, 2008.

\bibitem{Tatar}
M.~D. Kassim, K.~M. Furati, and N.-E. Tatar.
\newblock On a differential equation involving {H}ilfer-{H}adamard fractional
  derivative.
\newblock {\em Abst. Appl. Anal.}, 2012:17 pages, 2012.

\bibitem{Kassim}
M.~D. Kassim and N.-E. Tatar.
\newblock Well-posedness and stability for a differential problem with
  {H}ilfer-{H}adamard fractional derivative.
\newblock {\em Abst. Appl. Anal.}, 2014:1--7, 2014.

\bibitem{katugampola2}
U.~N. Katugampola.
\newblock New fractional integral unifying six existing fractional integrals.
\newblock {\em eprint arXiv:1612.08596 (2016)}, page 6 pages.

\bibitem{katugampola1}
U.~N. Katugampola.
\newblock New approach to a generalized fractional integral.
\newblock {\em Appl. Math. Comput.}, 218:860--865, 2011.

\bibitem{katugampola}
U.~N. Katugampola.
\newblock A new approach to generalized fractional derivatives.
\newblock {\em Bull. Math. Anal. Appl.}, 6:1--15, 2014.

\bibitem{Saigo}
A.~A. Kilbas and M.~Saigo.
\newblock On solution of nonlinear {A}bel–{V}olterra integral equation.
\newblock {\em J. Math. Anal. Appl.}, 229(1):41 -- 60, 1999.

\bibitem{kilbas}
A.~A. Kilbas, H.~M. Srivastava, and J.~J.{ }Trujillo.
\newblock {\em Theory and Applications of the Fractional Differential
  Equations}, volume 204.
\newblock Elsevier, Amsterdam, 2006.

\bibitem{Barbara}
B.~Lupińska, T.~Odzijewicz, and E.~Schmeidel.
\newblock On the solutions to a generalized fractional {C}auchy problem.
\newblock {\em Appl. Anal. Disc. Math.}, 10(2):332 -- 344, 2016.

\bibitem{daniela}
D.~S. Oliveira.
\newblock {\em On fractional derivatives (in progress)}.
\newblock PhD thesis, University of Campinas, Campinas/SP, 2017.

\bibitem{danielaeco}
D.~S. Oliveira and E.~Capelas{ }de{ }Oliveira.
\newblock On a {C}aputo-type fractional derivative, submitted for publication
  (2017).

\bibitem{vanterler}
J.~Vanterler{ }da{ }C.{ }Sousa and E.~Capelas{ }de{ }Oliveira.
\newblock ${M}-$fractional derivative with classical properties.
\newblock {\em eprint arXiv:1704.08186 (2017)}, page 18 pages.

\bibitem{capelas}
J.~Vanterler{ }da{ }C.{ }Sousa and E.~Capelas{ }de{ }Oliveira.
\newblock A new truncated ${M}-$fractional derivative unifying some fractional
  derivatives with classical properties.
\newblock {\em eprint arXiv:1704.08187 (2017)}, page 10 pages.

\end{thebibliography}
\end{document}